\documentclass[a4paper,12pt]{amsart}
\usepackage[utf8x]{inputenc}
\usepackage[T1]{fontenc}
\usepackage{latexsym,amsfonts,amssymb,mathrsfs,stmaryrd}
\usepackage{amsmath,amsthm,amsrefs}
\usepackage[all]{xy}
\usepackage{color,ifpdf}
\usepackage{enumerate}
\ifpdf
\usepackage[pdftex]{graphicx}
\usepackage[pdftex,
colorlinks=true,
linkcolor=red,
citecolor=green,
hyperindex,
plainpages=false,
bookmarks=true,
bookmarksopen=true,
bookmarksnumbered,]{hyperref}
\else
\usepackage[dvipdfm]{graphicx}
\usepackage[dvipdfm,
colorlinks=true,
linkcolor=red,
citecolor=green,
hyperindex,
plainpages=false,
bookmarks=true,
bookmarksopen=true,
bookmarksnumbered,]{hyperref}
\fi

\theoremstyle{plain}
\newtheorem{theorem}{Theorem}[section]
\newtheorem*{theorem*}{Theorem 2.8}

\newtheorem{lemma}[theorem]{Lemma}
\newtheorem{proposition}[theorem]{Proposition}

\theoremstyle{definition}
\newtheorem{definition}[theorem]{Definition}
\theoremstyle{remark}

\numberwithin{equation}{section}

\renewcommand{\H}{\mathcal{H}}

\newcommand{\R}{\mathbb{R}}
\renewcommand{\i}{\subset}
\newcommand{\sm}{\setminus}

\newcommand{\midd}{\;\middle|\;}
\newcommand{\diam}{{\rm diam}}
\newcommand{\dist}{{\rm dist}}
\newcommand{\Lip}{{\rm Lip}}
\newcommand{\id}{{\rm id}}

\begin{document}
\title{Existence of Minimizers for the Reifenberg Plateau problem}
\author{Yangqin FANG}
\address{Yangqin FANG\\
Laboratoire de Math\'ematiques d'Orsay\\
Universit\'e Paris-Sud (XI)\\
91405, Orsay Cedex, France}

\email{yangqin.fang@math.u-psud.fr}

\date{}
\maketitle
\begin{abstract}
	That is, given a compact set $B \subset \R^n$ (the boundary) and a subgroup $L$
	of the \v{C}ech homology group $\check{H}_{d-1}(B;G)$
	of dimension $d$ over some commutative group $G$, 
	we find a compact set $E \supset B$
	such that the image of $L$ by the natural map
	$\check{H}_{d-1}(B;G)\to\check{H}_{d-1}(S;G)$ induced by the inclusion
	$B \to E$, is reduced to $\{ 0 \}$, and such that the Hausdorff measure
	$\H^{d}(E \setminus B)$ is minimal under these constraints. Thus we have
	no restriction on the group $G$ or the dimensions $0 < d < n$.
\end{abstract}
\section{Introduction}
Plateau problems usually concern the existence of {\em surfaces} 
that minimize an {\em area} under some {\em boundary} constraints,
but many different meanings can be given to the terms ``surface'' and 
``area'', and many different boundary constraints can be considered.

In the present text, we shall prove an existence result for a minor
variant of the homological Plateau problem considered by
Reifenberg \cite{Reifenberg:1960}. That is, we shall give ourselves
dimensions $0 < d < n$,  a compact set $B \i \R^n$, a commutative group
$G$, and a subgroup $L$ of the \v{C}ech homology group $\check{H}_{d-1}(B;G)$,
and we shall find a compact set $E \supset B$ that minimizes the Hausdorff
measure $\H^{d}(E \setminus B)$ under the constraint that 
the restriction to $L$ of the natural map
$\check{H}_{d-1}(B;G)\to\check{H}_{d-1}(S;G)$ induced by the inclusion
$B \to E$, is trivial. See the slightly more precise definitions below.

This problem was first studied by Reifenberg \cite{Reifenberg:1960},
who gave a general existence result when the group $G$ is compact.

Also, Almgren  \cite{Almgren:1968} announced an extension of Reifenberg's 
result, obtained in connection to varifolds, and where the Hausdorff measure 
$\H^{d}$ is no longer necessarily minimized alone, but integrated against an 
elliptic integrand.

More recently, De Pauw \cite{Pauw:2007a}
proved the  existence of minimizers also when
$G = {\mathbb{Z}}$ is the group of integers, $n=3$, $d=2$, and
$B$ is a nice curve. 

Here we remove these restrictions, and also use a quite different
method of proof, based on a construction of quasiminimal sets
introduced by Feuvrier \cite{Feuvrier:2009}.

Let us introduce some notation and definitions, and then we will
rapidly discuss our main result and its background. 
When $B \i \R^n$ is a compact set, $G$ is a commutative group,
and $k \geq 0$ is an integer, we shall denote by $H_{k}(B;G)$ 
and $\check{H}_{k}(B;G)$ the singular and \v{C}ech homology groups on $B$,
of order $k$ and with the group $G$;
we refer to \cite{ES:1952} for a definition and basic properties.

If $S$ is another compact set that contains $B$, we shall denote by 
$i_{B,S} : B \to S$ the natural inclusion, by
$H_{k}(i_{B,S}) : H_{k}(B;G)\to H_{k}(S;G)$
the corresponding homomorphism between homology groups, and by
$\check{H}_{k}(i_{B,S}):\check{H}_{k}(B;G)\to \check{H}_{k}(S;G)$
the corresponding homomorphism between \v{C}ech homology groups.

\begin{definition}
	Fix a compact set $B \i \R^n$, an integer $0 < d < n$, a commutative group $G$,
	and a subgroup $L$ of $\check{H}_{d-1}(B;G)$. 
	We say that the compact set $S \supset B$ spans $L$ in \v{C}ech homology if 
	$L\subset \ker\check{H}_{d-1}(i_{B,S})$.
\end{definition}

A simple case is when $L$ is the full group $\check{H}_{k}(B;G)$; then
$S \supset B$ spans $L$ in \v{C}ech homology precisely when the mapping
$H_{k}(i_{B,S})$ is trivial. But it may be interesting to study other other subgroups
$L$, and this will not make the proofs any harder.

We have a similar definition of ``$S \supset B$ spans $L$ in singular homology'',
where we just replace $\check{H}_{d-1}(i_{B,S})$ with $H_{d-1}(i_{B,S})$. It would 
be very nice if our main statement was in terms of singular homology, but unfortunately
we cannot prove the corresponding statement at this time.

We shall denote by $\H^d(E)$ the $d$-dimensional Hausdorff measure
of the Borel set $E \i \R^n$. Recall that
\[
\H^{d}(E)=\lim_{\delta\to0+}\H_{\delta}^{d}(E),
\]
where 
\[
\H_{\delta}^{d}(E)=\inf\left\{ \sum_{j}\diam(U_{j})^{d}\midd
E\subset\bigcup_{j} U_{j},\ \diam(U_{j})<\delta \right\},
\]
i.e., the infimum is over all the coverings of $E$ by a countable collection of
sets $U_j$ with diameters less than $\delta$. We refer to 
\cite{Mattila:1995,Federer:1969}
for the basic properties of $\H^d$; notice incidentally
that we could also have used the spherical Hausdorff measure, or even some
more exotic variants, essentially because the competition will rather fast be restricted
to rectifiable sets, for which the two measures are equal.

For our main result, we are given $B \i \R^n$, $d\in (0,n)$, $G$, and a 
subgroup $L$ of $\check{H}_{d-1}(B;G)$, and we set
\[
\mathcal{F}=\mathcal{F}(B,G,L)=\left\{ S\subset\mathbb{R}^{n}\midd S{\text{ 
is a compact set that contains }B}\atop {\text{ and spans }L\text{ in
\v{C}ech homology}} \right\}
\]
For any $S\in \mathcal{F}$ and any Lipschitz map
$\varphi:\mathbb{R}^{n}\to\mathbb{R}^{n}$ with $\varphi\vert_{B}=\id_{B}$, we
can easily get that $\varphi(S)\in \mathcal{F}$. Indeed, 
$i_{B,\varphi(S)}=\varphi\circ i_{B,S}$,
\[
\check{H}_{d-1}(i_{B,\varphi(S)})=
\check{H}_{d-1}(\varphi)\circ\check{H}_{d-1}(i_{B,S}),
\]
so 
\[
\ker\left( \check{H}_{d-1}(i_{B,S}) \right)\subset\ker\left(
\check{H}_{d-1}(i_{B,\varphi(S)}) \right),
\]
thus $\varphi(S)$ spans $L$ in \v{C}ech homology.

We denote by $G(n,d)$ the Grassmann manifold of unoriented $d$-plane
directions in $\mathbb{R}^{n}$. An integrand is a continuous function 
$F:\mathbb{R}^{n}\times G(n,d)\to\mathbb{R}^{+}$ which is bounded, i.e. 
there exist $0<c\leq C<+\infty$ such that $c\leq F(x,\pi)\leq C$ for all 
$x\in \mathbb{R}^{n}$ and $\pi\in G(n,d)$.
For any $d$-dimensional set $E$, we suppose that $E=E_{rec}\cup E_{irr}$,
where $E_{rec}$ is $d$-rectifaible, $E_{irr}$ is $d$-irreuler. 
For any integrand $F$ and any $\kappa> 0$, we set 
\[
\mathbb{F}_{\kappa}(E)=\kappa\H^{d}(E_{irr})+
\int_{x\in E}F(x,T_{x}E_{rec})d\H^{d}(x).
\]
We shall define a class of integrands $\mathfrak{F}$, that are integrands
$F$ satisfying the following properties: For all $x\in\mathbb{R}^{n}$,
$\delta>0$, there exists
$\varepsilon(x,\cdot):\mathbb{R}^{+}\to\mathbb{R}^{+}$ with $\lim_{r\to
0}\varepsilon(x,r)=0$ such that for all $\pi\in G(n,d)$,
\[
\mathbb{F}_{\kappa}(D_{\pi,r})\leq \mathbb{F}_{\kappa}(S)+\varepsilon(x,r)r^{d},
\]
where $0<r<\delta$, $D_{\pi,r}=\pi\cap \overline{B(x,r)}$,
$S\subset\overline{B(x,r)}$
is a compact $d$-rectifiable set which cannot be mapped
into $\partial D_{\pi,r}:=\pi\cap \partial B(x,r)$ by any Lipschitz map 
$\varphi:\mathbb{R}^{n}\to\mathbb{R}^{n}$ with
$\varphi\vert_{\partial D_{\pi,r}}=\id_{\partial D_{\pi,r}}$.

It is easy to see that the class $\mathfrak{F}$ does not depend on $\kappa$. 
Almgren introduced elliptic integrands in the papers \cite[p.423]{Almgren:1974}
and \cite[p.322]{Almgren:1968}. One can easily check that all elliptic
integrands and all continuous functions $h:\mathbb{R}^{n}\to [a,b]$ with 
$0<a<b<+\infty$ are contained in the class $\mathfrak{F}$.

We set 
\[
m = m(B,G,L,F,\kappa) = \inf \left\{ \mathbb{F}_{\kappa}(S \sm B) \, ; \, S
\in \mathcal{F}(B,G,L)\right\}.
\]
As the reader may have guessed, we want to find $E \in \mathcal{F}$
such that $\mathbb{F}_{\kappa}(S \sm B) = m$. Of course the problem will only be interesting 
when $m < +\infty$, which is usually fairly easy to arrange.
We subtracted $B$ because this way we shall not need to assume that
$\H^d(B) < +\infty$, but of course if $\H^d(B) < +\infty$ we could 
replace $\mathbb{F}_{\kappa}(S \sm B)$ with $\mathbb{F}_{\kappa}(S)$ in the definition. 
Our theorem is thus the following.

\begin{theorem}
	Let the compact set $B \i 	\R^n$, a commutative group $G$, and a subgroup 
	$L$ of $\check{H}_{d-1}(B;G)$ be given. Suppose that $ m(B,G,L,F,\kappa) < +\infty$.
	Then there exists a compact set
	$E \in \mathcal{F}(B,G,L)$ such that $\mathbb{F}_{\kappa}(E \sm B) =
	m(B,G,L,F,\kappa)$.
\end{theorem}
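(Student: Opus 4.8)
The plan is to run the direct method of the calculus of variations, but with the essential intermediate step of replacing an arbitrary minimizing sequence by one made of quasiminimal sets, for which both a compactness theorem and the lower semicontinuity of $\mathbb{F}_{\kappa}$ become available. Naively, one picks a minimizing sequence, extracts a Hausdorff limit, and checks the constraint and the functional; but for a general minimizing sequence the limit may carry strictly less measure (so $\mathbb{F}_{\kappa}$ is not lower semicontinuous) and the spanning condition need not survive. The reduction to quasiminimizers, via Feuvrier's construction, is precisely what repairs both defects.

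First I would choose a minimizing sequence $S_k \in \mathcal{F}(B,G,L)$ with $\mathbb{F}_{\kappa}(S_k \sm B) \to m$. A standard confinement step lets me assume they all lie in a fixed closed ball: since $B$ is compact, fix $\overline{B(0,R)} \supset B$ and let $\rho$ be the $1$-Lipschitz radial retraction onto it; then $\rho\vert_B = \id_B$, so by the remark preceding the theorem $\rho(S_k) \in \mathcal{F}$, and the comparability of $\mathbb{F}_{\kappa}$ with $\H^d$ coming from $c \le F \le C$ keeps the sequence minimizing after this truncation. I may thus assume $S_k \i \overline{B(0,R)}$ for all $k$.

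The heart of the argument is then to apply Feuvrier's polyhedral deformation to each $S_k$: inside a sufficiently fine adapted polyhedral grid one pushes $S_k$, by a Lipschitz map equal to $\id_B$ on $B$, onto a nearly polyhedral competitor $S_k'$ whose functional exceeds $\mathbb{F}_{\kappa}(S_k \sm B)$ by at most $1/k$ and which is quasiminimal with constants independent of $k$. Because the deformation fixes $B$, the same remark gives $S_k' \in \mathcal{F}$, so $(S_k')$ is still minimizing, and it now enjoys uniform Ahlfors regularity: there are $0 < c_0 \le C_0$ with $c_0 r^d \le \H^d\big(S_k' \cap B(x,r)\big) \le C_0 r^d$ for $x \in S_k'$ and $r$ small. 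With this uniform quasiminimality I would invoke the compactness and limiting theorems of David for quasiminimal sets: a subsequence of $(S_k')$ converges in local Hausdorff distance to a compact $d$-rectifiable set $E \supset B$ that is again quasiminimal, and the approximate minimality of flat disks encoded in the class $\mathfrak{F}$ yields the lower semicontinuity $\mathbb{F}_{\kappa}(E \sm B) \le \liminf_k \mathbb{F}_{\kappa}(S_k' \sm B) = m$.

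It remains to show $E \in \mathcal{F}$, that is, that $E$ spans $L$; since $m$ is the infimum, this simultaneously forces $\mathbb{F}_{\kappa}(E \sm B) = m$. This last step is the main obstacle, namely the stability of the \v{C}ech spanning constraint under Hausdorff limits. Here I would exploit the continuity (tautness) of \v{C}ech homology: $\check{H}_{d-1}(E;G)$ is the inverse limit of the corresponding groups of the open neighborhoods of $E$, and for large $k$ the set $S_k'$ lies in an arbitrarily small such neighborhood. Since each $\check{H}_{d-1}(i_{B,S_k'})$ kills $L$, any class in $L$ must already die in every neighborhood of $E$, hence in $E$ itself, giving $L \subset \ker \check{H}_{d-1}(i_{B,E})$. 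The uniform Ahlfors regularity is exactly what keeps the convergence strong enough for this continuity argument to apply; combining it with the previous paragraph produces $E \in \mathcal{F}(B,G,L)$ with $\mathbb{F}_{\kappa}(E \sm B) = m(B,G,L,F,\kappa)$, as required.
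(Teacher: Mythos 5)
Your overall strategy is the paper's own: replace an arbitrary minimizing sequence by a Feuvrier-type quasiminimal one, extract a Hausdorff limit, and conclude via David's limit theorem, the lower semicontinuity statement (Theorem 2.5) and the stability of the \v{C}ech spanning condition under Hausdorff limits (Proposition 2.6). But your confinement step has a genuine gap, and it is precisely the point the paper's ordering of the argument is designed to avoid. You retract $S_k$ onto a ball $\overline{B(0,R)}$ fixed in advance and claim that ``comparability of $\mathbb{F}_{\kappa}$ with $\H^{d}$'' keeps the sequence minimizing. It does not: since the retraction $\rho$ moves points and tangent planes to places where $F$ may be larger, a $1$-Lipschitz map does not decrease $\mathbb{F}_{\kappa}$, and the bounds $c\le F\le C$ only give
\[
\mathbb{F}_{\kappa}\bigl(\rho(S_k)\setminus B\bigr)\le \Lambda\,\mathbb{F}_{\kappa}(S_k\setminus B),
\qquad \Lambda=\frac{\max\{\kappa,\sup F\}}{\min\{\kappa,\inf F\}}.
\]
So after truncation you only know $\limsup_k\mathbb{F}_{\kappa}(\rho(S_k)\setminus B)\le\Lambda m$, and your final set would only satisfy $\mathbb{F}_{\kappa}(E\setminus B)\le\Lambda m$; nothing forces an arbitrary minimizing sequence to carry vanishing measure outside a ball chosen beforehand. (Your step is correct when $F$ is constant, i.e.\ for pure Hausdorff measure, but the theorem is stated for general $F\in\mathfrak{F}$.) The paper instead applies Feuvrier's lemma \emph{first}, on large domains: the resulting $d$-dimensional quasiminimal pieces $F_k$ are uniformly locally Ahlfors regular, so the lower bound $\H^{d}(F_k\cap Q)\ge C^{-1}\diam(Q)^{d}$ combined with $\mathbb{F}_{\kappa}\le m+1$ forces all the $F_k$ into one fixed ball; the only pieces the retraction then actually moves lie in $(d-1)$-dimensional skeleta, hence are $\H^{d}$-null, and the retraction costs nothing at all in $\mathbb{F}_{\kappa}$. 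This is where Ahlfors regularity is genuinely used.

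A second, related gap: you treat Feuvrier's output as a single uniformly quasiminimal set and pass directly to its Hausdorff limit. What Lemma 2.8 actually produces is a decomposition $E_k''\cap W_k=E_k^{d}\sqcup E_k^{d-1}\sqcup\dots\sqcup E_k^{0}$ in which only the top stratum is a $d$-dimensional quasiminimal set satisfying the core condition $E_k=E_k^{\ast}$ required in Theorem 2.5; the lower strata are only $\ell$-dimensionally quasiminimal, $\ell<d$. This matters, because Hausdorff limits of $\H^{d}$-null sets can have positive (even infinite) $\H^{d}$-measure, so without a further argument the limit $E$ could acquire $d$-dimensional mass from the collapsing low-dimensional skeleta and the key inequality $\mathbb{F}_{\kappa}(E\setminus B)\le m$ would not follow. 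The paper handles this with a covering argument based on $\ell$-dimensional Ahlfors regularity, showing $\dim_{\H}E^{\ell}\le\ell$ and hence $\H^{d}(E^{\ell})=0$ for $\ell<d$, and only then applies Theorem 2.5 to the top stratum --- and it must do so on an exhaustion of $U=B(0,R+1)\setminus B$, since the quasiminimality furnished by Lemma 2.8 holds only on open sets compactly contained in $U$, never uniformly up to $B$. Finally, a small misattribution: Proposition 2.6 is purely topological (the inverse-limit argument you sketch is exactly its proof) and needs no Ahlfors regularity whatsoever; regularity enters in the confinement and stratification steps above, not in the homological stability.
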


Notice that the statement is still true when $m = +\infty$,
but not interesting. 

As was mentioned before, this theorem was proved by Reifenberg in 
\cite{Reifenberg:1960},
under the additional assumption that $G$ be compact.

A slighthy unfortunate feature of both statements is that they use
the \v{C}ech homology groups. A similar statement with the singular homology groups
would be very welcome, both because they are simpler and because connections with
the theories of flat chains and currents would be much simpler. Unfortunately, 
singular homology does not pass to the limit as nicely as \v{C}ech homology.

Reifenberg was not the first person to give beautiful results on the Plateau Problem.
Douglas \cite{Douglas:1931} gave an essentially optimal existence result for the following 
parameterization problem: given a simple closed curve $\gamma$ in $\R^n$, find a surface $E$, parameterized by the closed unit disk in the plane, so that the restriction of the parameterization to 
the unit circle parameterizes $\gamma$, and for which the area (computed with the Jacobian and
counting multiplicity) is minimal.

But the most popular way to state and prove existence results for the Plateau problem
has been through sets of finite perimeter (De Giorgi) and currents (Federer and Fleming).
In particular, Federer and Fleming \cite{FF:1960} gave a very general existence result
for integral currents $S$ whose mass is minimal under the boundary constraint
$\partial S = T$, where $T$ is a given integral current such that $\partial T = 0$.
Mass-minimizing currents also have a very rich regularity theory; we refer to
\cite{Morgan:1989} for a nice overview.

In the author's view, Reifenberg's homological minimizers often give a better 
description of soap film than mass minimizers, and they are much closer to
(the closed support of) size minimizing currents. Those are currents $S$
that minimize the quantity $Size(S)$ under a boundary constraint $\partial S = T$
as before, but where $Size(S)$ is, roughly speaking, the $\H^d$-measure
of the set where the multiplicity function that defines $S$ as an integral current
is nonzero. Thus the mass counts the multiplicity, but not the size.
We refer to \cite{Pauw:2007a} for precise definitions, and
a more detailed account of the Plateau problem for size minimizing currents.
We shall just mention two things here, in connection to the Reifenberg problem.
Figure~\ref{fig:1} depicts the support of a current which is size minimizing, but
not mass minimizing (the multiplicity on the central disk is $2$, so the mass is 
larger than the size).
\begin{figure}
	\centering
	\includegraphics[width=3in]{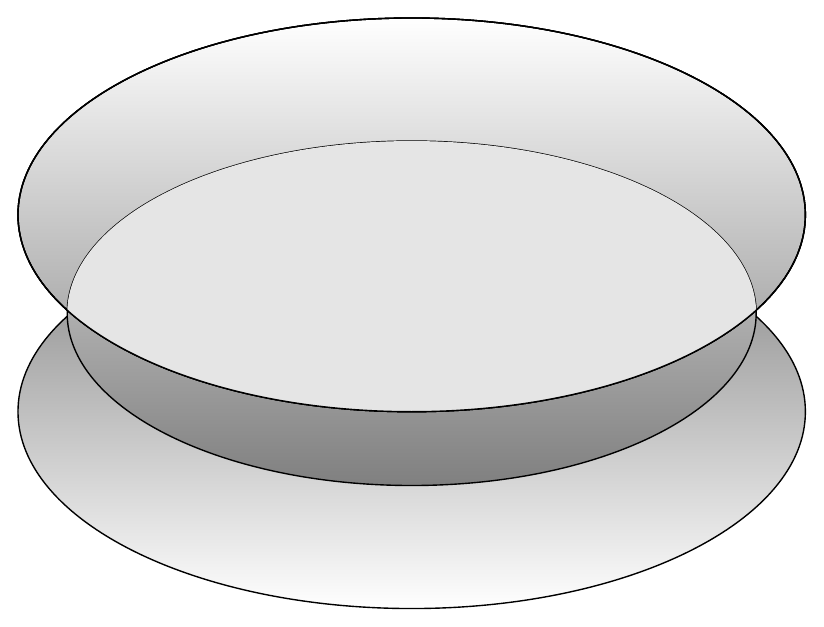}
	\caption{Size minimizing but not mass minimizing}
	\label{fig:1}
\end{figure}

Even when the boundary current $T$ is the current of integration on a smooth
(but possibly linked) curve in $\R^3$, there is no general existence for a
size minimizing current. However, Franck Morgan proved existence of a size 
minimizing current \cite{Morgan:1989} when the boundary is a smooth submanifold 
contained in the boundary of a convex body, and in \cite{PH:2003}, Thierry de 
Pauw and Robert Hardt proved the existence of currents which minimize energies 
that lie somewhere between mass and size (typically, obtained by integration 
of some small power of the multiplicity).

The reason why the usual proof of existence for mass minimizers, using a compactness
theorem, does not work for size minimizers, is that the size of $S$ does not give any
control on the multiplicity, and so the limit of a minimizing sequence may well not
have finite mass (or even not exist as currents). This issue is related to the reason why Reifenberg
restricted to compact groups (so that multiplicities don't go to infinity).

In \cite{Almgren:1968}, F. Almgren proposed a scheme for proving Reifenberg's theorem,
and even extending it to general groups and elliptic integrands. The scheme uses
the then recently discovered varifolds, or flat chains, and a multiple layers argument to
get rid of high multiplicities, but it is also very subtle and elliptic.
Incidentally, Almgren uses Vietoris relative homology groups $H_{d}^{v}$ instead of
\v{C}ech homology groups. In his paper, a boundary $B$ is
a compact $(d-1)$-rectifiable subset of $\mathbb{R}^{n}$ with 
$\H^{d-1}(B)<+\infty$, a surface $S$ is a compact $d$-rectifiable subset of
$\mathbb{R}^{n}$. For any $\sigma\in H_{d}^{v}(\mathbb{R}^{n},B;G)$, a
surface $S$ spans $\sigma$ if $i_{k}(\sigma)=0$, where we
denote by $H_{d}^{v}(\mathbb{R}^{n},B;G)$ the $d$-th Vietoris
relative homology groups of $(\mathbb{R}^{n},B)$, and 
\[
i_{k}: H_{d}^{v}(\mathbb{R}^{n},B;G)\to H_{d}^{v}(\mathbb{R}^{n},B\cup S;G)
\]
is the homomorphism induced by the inclusion map $i:B\to B\cup S$. 
But we should mention that Dowker, in \cite[Theorem 2a]{Dowker:1952}, proved that 
\v{C}ech and Vietoris homology groups over an abelian group $G$ are isomorphic
for arbitrary topological spaces. 

There is some definite relation between Refenberg's homological Plateau
problem and the size minimizing currents, and for instance
T. De Pauw \cite{Pauw:2007a} shows that in the simple case when $B$ is a curve,
the infimums for the two problems are equal. 
In the same paper, T. De Pauw also extends Reifenberg's result (for curves
in $\R^3$) to the group $G = {\mathbb Z}$. Unfortunately, even though
the proof uses minimizations among currents, this does not yet give a size minimizer 
(one would need to construct an appropriate current on the minimizing set).

\medskip
Our proof here is more in the spirit of the initial proof of Reifenberg,
but will rely on two more recent developments that make it work more smoothly and
ignore multiplicity issues.

The first development is a lemma introduced by Dal Maso,
Morel, and Solimini \cite{DMS} in the context of the Mumford-Shah 
functional, and which gives a sufficient condition, on a sequence of sets $E_k$
that converges to a limit $E$ in Hausdorff distance,
for the lower semicontinuity inequality
\[
\H^d(E) \leq \liminf_{k \to +\infty} \H^d(E_k).
\]
It is very convenient here because we want to work with sets and we do not 
want to use weak limits of currents. Since we want to deal with integrands,
we will show the following lower semicontinuity inequality,
\[
\mathbb{F}_{\kappa}(E)\leq \liminf_{k\to +\infty}\mathbb{F}_{\kappa}(E_{k}).
\]

But our main tool will be a recent result of V. Feuvrier \cite{Feuvrier:2009}, 
where he uses a construction of polyhedral networks adapted to a given set
(think about the usual dyadic grids, but where you manage to have faces that
are very often parallel to the given set) to construct a minimizing sequence
for our problem, but which has the extra feature that it is composed of
locally uniformly quasiminimal sets, to which we can apply Dal Maso,
Morel, and Solimini's lemma.

Such a construction was used by Xiangyu Liang, to prove existence results
for sets that minimize Hausdorff measure under some homological generalization
of a separation constraint (in codimension larger than $1$).

%% Plan? Thanks?

\section{Existence of minimizers under Reifenberg homological conditions}
In this section we prove an existence theorem for sets in $\mathbb{R}^{n}$
that minimize the Hausdorff measure under Reifenberg homological conditions.

\begin{definition}
	A polyhedral complex $\mathcal{S}$ is a finite set of closed convex
	polytopes in $\mathbb{R}^{n}$, such that two conditions are satisfied:
	\begin{enumerate}[{\indent\rm (1)}]
		\item If $Q\in \mathcal{S}$, and $F$ is a face of $Q$, then
			$F\in\mathcal{S}$; 
		\item If $Q_1,Q_2\in \mathcal{S}$, then $Q_{1}\cap Q_{2}$ is a face of
			$Q_1$ and $Q_2$ or $Q_1\cap Q_2=\emptyset$.
	\end{enumerate}
	The subset $\left\vert \mathcal{S}\right\vert:=\cup_{Q\in\mathcal{S}}Q$ of
	$\mathbb{R}^{n}$ equipped with the induced topology is called the underlying 
	space of $\mathcal{S}$. The $d$-skeleton of $\mathcal{S}$ is the union of the
	faces whose dimension is at most $d$.

	A dyadic complex is a polyhedral complex consisting of closed dyadic cubes.
\end{definition}
\bigskip
Let $\Omega\subset\mathbb{R}^{n}$ be an open subset, $0<M<+\infty$,
$0<\delta\leq+\infty$, $\ell\in\mathbb{N}$, $0\leq \ell \leq n$.
Let $f:\Omega\to\Omega$ be a Lipschitz map; we set 
\[
W_{f}=\{ x\in\Omega\mid f(x)\neq x \}.
\]

\begin{definition}
	Let $E$ be a relatively closed set in $\Omega$. We say that $E$ is an
	$(\Omega,M,\delta)$-quasiminimal set of dimension $\ell$ if,
	$\H^{\ell}(E\cap B)<+\infty$ for every closed ball $B\subset\Omega$, and 
	\[
	\H^{\ell}(E\cap W_{f})\leq M\H^{\ell}(f(E\cap W_{f}))
	\]
	for every Lipschitz map $f:\Omega\to\Omega$ such that $W_{f}\cup f(W_{f})$
	is relatively compact in $\Omega$ and $\diam(W\cup f(W_{f}))<\delta$.

	We denote by
	${\bf QM}(\Omega,M,\delta,\H^{\ell})$ the collection of all 
	$(\Omega,M,\delta)$-quasiminimal sets of dimension $\ell$.
\end{definition}
\medskip
We note that, for any open set $\Omega'\subset \Omega$, any positive
numbers $\delta'\leq \delta$, and any $M'\geq M$, if $E\in {\bf
QM}(\Omega,M,\delta,\H^{\ell})$, then 
$E\cap \Omega'\in {\bf QM}(\Omega',M',\delta',\H^{\ell})$.

\begin{definition}
	Let $\Omega$ be an open subset of $\mathbb{R}^{n}$. A relatively closed set
	$E\subset \Omega$ is said to be locally Ahlfors-regular of
	dimension $d$ if there is a constant $C>0$ and $r_{0}>0$ such that
	\[
	C^{-1}r^{d}\leq\H^{d}(E\cap B(x,r))\leq Cr^{d}
	\]
	for all $0<r<r_{0}$ with $B(x,2r)\subset\Omega$.
\end{definition}

\begin{lemma}\label{le:QMAR}
	Let $E$ be a $d$-rectifiable subset of $\mathbb{R}^{n}$. If $E$ is a local
	Ahlfors-regular and $\H^{d}(E)<+\infty$, then for $\H^d$-a.e. $x\in E$, $E$
	has a true tangent plane at $x$, i.e. there exists a $d$-plane $\pi$ such that for
	any $\varepsilon>0$, there is a $r_{\varepsilon}>0$ such that  
	\[
	E\cap B(x,r)\subset \mathcal{C}(x,\pi,r,\varepsilon),\ \text{ for }
	0<r<r_{\varepsilon},
	\]
	where 
	\[
	\mathcal{C}(x,\pi,r,\varepsilon)=\{ y\in B(x,r)\mid \dist(y,\pi)\leq
	\varepsilon\left\vert y-x\right\vert\}.
	\]
\end{lemma}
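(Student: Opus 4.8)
\emph{The plan} is to deduce the \emph{true} (pointwise) tangent plane from the \emph{approximate} (measure-theoretic) tangent plane, using the lower Ahlfors bound to rule out ``spikes'' of $E$ that stick out of the cone while carrying no $\H^{d}$-mass. Since $E$ is $d$-rectifiable with $\H^{d}(E)<+\infty$, standard rectifiability theory (see \cite{Mattila:1995}) provides, at $\H^{d}$-a.e. $x\in E$, an approximate tangent $d$-plane $\pi=\pi(x)$ through $x$; concretely, for every $\eta>0$,
\[
\lim_{r\to0}r^{-d}\,\H^{d}\bigl(\{\,y\in E\cap B(x,r)\mid \dist(y,\pi)>\eta\left\vert y-x\right\vert\,\}\bigr)=0.
\]
This is precisely the assertion of the lemma after $E\cap B(x,r)\sm\mathcal{C}(x,\pi,r,\eta)$ has been replaced by its measure, so the whole point is to upgrade ``small measure outside the cone'' to ``nothing outside the cone''.

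I would argue by contradiction. Fix such an $x$, with approximate tangent plane $\pi$, and suppose the pointwise conclusion fails there. Then there are $\varepsilon>0$, a sequence $r_k\to0$, and points $y_k\in E\cap B(x,r_k)$ with $\dist(y_k,\pi)>\varepsilon\left\vert y_k-x\right\vert$. Writing $s_k=\left\vert y_k-x\right\vert\leq r_k$, we have $s_k\to0$ and $\dist(y_k,\pi)>\varepsilon s_k$. The key geometric observation is that an entire ball around $y_k$ stays outside a cone of half the aperture: choosing a constant $c=c(\varepsilon)>0$ small enough (an elementary triangle-inequality computation shows any $c<\varepsilon/(2+\varepsilon)$ works), every $z\in B(y_k,c s_k)$ satisfies $\dist(z,\pi)>\tfrac{\varepsilon}{2}\left\vert z-x\right\vert$ and $\left\vert z-x\right\vert\leq(1+c)s_k<2s_k$. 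Hence
\[
E\cap B(y_k,c s_k)\subset\bigl(E\cap B(x,2s_k)\bigr)\sm\mathcal{C}\bigl(x,\pi,2s_k,\tfrac{\varepsilon}{2}\bigr).
\]

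Since $y_k\in E$ and $c s_k\to0$, the lower Ahlfors bound applies at $y_k$ at scale $c s_k$ for $k$ large, giving $\H^{d}\bigl(E\cap B(y_k,c s_k)\bigr)\geq C^{-1}(c s_k)^{d}$. Combined with the inclusion above,
\[
\frac{\H^{d}\bigl((E\cap B(x,2s_k))\sm\mathcal{C}(x,\pi,2s_k,\tfrac{\varepsilon}{2})\bigr)}{(2s_k)^{d}}\geq\frac{C^{-1}(c s_k)^{d}}{(2s_k)^{d}}=\frac{C^{-1}c^{d}}{2^{d}}>0,
\]
a fixed positive constant independent of $k$. But taking $r=2s_k\to0$ and $\eta=\varepsilon/2$ in the approximate tangent plane relation forces this very ratio to tend to $0$, a contradiction. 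Hence the pointwise conclusion holds at $x$, and therefore at $\H^{d}$-a.e. $x\in E$. The step that really carries the argument is the uniform geometric estimate that a ball of radius proportional to the scale around the offending point $y_k$ lies entirely outside the widened cone; the rest is bookkeeping, and the essential mechanism is that Ahlfors regularity turns a single stray point of $E$ into a definite amount of $\H^{d}$-measure, which the approximate tangent plane forbids.
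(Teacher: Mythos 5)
Your proposal is correct and follows essentially the same route as the paper: both pass from the approximate tangent plane (Mattila, Theorem 15.11) to the true one by contradiction, placing a ball of radius comparable to $\left\vert y_{k}-x\right\vert$ around an offending point $y_{k}$, noting it lies outside the cone of half aperture, and using the lower Ahlfors bound to contradict the vanishing of the measure ratio. Your constant $c<\varepsilon/(2+\varepsilon)$ is in fact verified more carefully than the paper's choice of radius $\varepsilon\rho_{n}/4$, which strictly speaking needs to be slightly shrunk for the stated cone inclusion to hold.
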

\begin{proof}
	Since $E$ is rectifiable, by Theorem 15.11 in~\cite{Mattila:1995}, for
	$\H^{d}$-a.e. $x\in E$, $E$ has an approximate tangent plane $\pi$ at $x$,
	i.e. 
	\[
	\limsup_{\rho\to 0}\frac{\H^{d}(E\cap B(x,\rho))}{\rho^{d}}>0,
	\]
	and there exists a $d$-plane $\pi$ such that for all $\varepsilon>0$,
	\begin{equation}\label{eq:app}
		\lim_{\rho\to 0}\rho^{-d}\H^{d}(E\cap
		B(x,\rho)\setminus\mathcal{C}(x,\pi,\rho,\varepsilon))=0.
	\end{equation}

	We will show that $\pi$ is a true tangent plane. Suppose not, that is, there 
	exists an $\varepsilon>0$ such that for all $\rho>0$, $E\cap
	B(x,\rho)\setminus\mathcal{C}(x,\pi,\rho,\varepsilon)\neq\emptyset$. We take
	a sequence of points $y_{n}\in
	E\setminus\mathcal{C}(x,\pi,\rho,\varepsilon)$ with $\left\vert
	y_{n}-x\right\vert \to 0$, we put $\rho_{n}=2\left\vert y_{n}-x\right\vert$,
	then 
	\[
	B(x,\rho_{n})\setminus\mathcal{C}\left(x,\pi,\rho_{n},\frac{\varepsilon}{2}\right)\supset
	B\left(y_{n},\frac{\varepsilon\rho_{n}}{4}\right)
	\]
	and
	\begin{align*}
		\rho_{n}^{-d}\H^{d}\left(E\cap B(x,\rho_{n})\setminus\mathcal{C}
		\left(x,\pi,\rho_{n}, \frac{\varepsilon}{2}\right)\right)
		&\geq\rho_{n}^{-d}\H^{d}\left(E\cap B\left(y_{n},
		\frac{\varepsilon\rho_{n}}{4}\right)\right)\\
		&\geq C^{-1}\left(\frac{\varepsilon}{4}\right)^{d},
	\end{align*}
	this is in contradiction with~(\ref{eq:app}), so we proved the lemma.
\end{proof}

Let $\{ E_{k} \}$ be a sequence of closed sets in $\Omega$, and $E$ a closed
set of $\Omega$. We say that $E_{k}$ converges to $E$ if 
\[
\lim_{k\to\infty}d_{K}(E,E_{k})=0 \text{ for every compact set }K\subset \Omega,
\]
where
\[
d_{K}(E,E_{k})=\sup\{ \dist(x,E_{k})\mid x\in E\cap K \}+\sup\{ \dist(x,E)\mid
x\in E_{k}\cap K\}.
\]
For any set $E\subset\mathbb{R}^{n}$, we set 
\[
E^{*}=\{ x\in E\mid \H^{d}(E\cap B(x,r))>0,\ \forall r>0  \};
\]
we call $E^{*}$ the core of $E$.
We will prove the following lower semicontinuity properties.

\begin{theorem}\label{thm:lsc}
	Let $\Omega\subset\mathbb{R}^{n}$ be an open set. Let $(E_{k})_{k\geq 1}$ be
	a sequence of quasiminimal sets in ${\bf QM}(\Omega, M,\delta,\H^{d})$ such 
	that $E_{k}=E_{k}^{\ast}$ and $E_{k}$ converges to $E$. Then for any
	$F\in\mathfrak{F}$,
	\[
	\mathbb{F}_{\kappa}(E)\leq\liminf_{k\to+\infty}\mathbb{F}_{\kappa}(E_{k}).
	\]
\end{theorem}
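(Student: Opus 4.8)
The plan is to deduce the integrand inequality not from any pointwise control of the tangent directions of the $E_k$ (which need not converge), but from the almost-minimality of flat disks built into the definition of $\mathfrak{F}$, localized over a good covering of $E$. First I would record the reductions. By the regularity theory for quasiminimal sets, each $E_k\in\mathbf{QM}(\Omega,M,\delta,\H^{d})$ (which equals its core $E_k^{\ast}$) is $d$-rectifiable and locally Ahlfors-regular, with constants depending only on $M,\delta,n,d$; and by the compactness theorem for limits of uniformly quasiminimal sets the limit $E$ is again quasiminimal, hence $d$-rectifiable, locally Ahlfors-regular, and, by Lemma~\ref{le:QMAR}, it has a true tangent plane $T_xE$ at $\H^{d}$-a.e.\ $x$. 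Since rectifiable sets have a null purely unrectifiable part, $\mathbb{F}_{\kappa}(E)=\int_{E}F(x,T_xE)\,d\H^{d}(x)$ and likewise for each $E_k$, so the $\kappa$-term drops out. As $E$ is locally Ahlfors-regular, $\H^{d}(E\cap K)<+\infty$ for every compact $K\i\Omega$, and it suffices to bound $\mathbb{F}_{\kappa}(E\cap K)$ by $\liminf_k\mathbb{F}_{\kappa}(E_k)$ and then exhaust $\Omega$.

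Next I would build the covering. Fix $\epsilon>0$ and a compact $K\i\Omega$. The map $x\mapsto T_xE$ is $\H^{d}$-measurable, so by Lusin there is a compact $E'\i E$ with $\H^{d}(E\sm E')<\epsilon$ on which it is continuous; moreover, at $\H^{d}$-a.e.\ $x\in E$ the density $\lim_{r\to0}\H^{d}(E\cap B(x,r))/(\omega_d r^{d})$ equals $1$. Combining these facts with the continuity of $F$ and a Besicovitch/Vitali selection, I would extract finitely many disjoint closed balls $\bar B_i=\bar B(x_i,r_i)\i\Omega$, with $x_i$ a density point of $E'$ and $\pi_i:=T_{x_i}E$, such that: (i) $E\cap\bar B_i\i\mathcal{C}(x_i,\pi_i,r_i,\eta)$ is Hausdorff $\eta r_i$-close to the disk $D_{\pi_i,r_i}$ and $(1-\eta)\omega_d r_i^{d}\le\H^{d}(E\cap B_i)\le(1+\eta)\omega_d r_i^{d}$; (ii) $\dist(T_yE,\pi_i)\le\eta$ for $\H^{d}$-a.e.\ $y\in E'\cap B_i$, and $F(y,\pi)\ge F(x_i,\pi_i)-\eta$ whenever $y\in\bar B_i$, $\dist(\pi,\pi_i)\le\eta$; (iii) the $B_i$ cover $E\cap K$ up to $\H^{d}$-measure $\epsilon$; and (iv) $r_i$ is small enough that the error $\varepsilon(x_i,r_i)$ from the definition of $\mathfrak{F}$ is at most $\eta$. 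Together these give $\mathbb{F}_{\kappa}(D_{\pi_i,r_i})\ge(F(x_i,\pi_i)-\eta)\omega_d r_i^{d}$ while $\mathbb{F}_{\kappa}(E\cap B_i)\le(F(x_i,\pi_i)+\eta)(1+\eta)\omega_d r_i^{d}+C\,\H^{d}((E\sm E')\cap B_i)$.

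Then I would pass to the sequence one ball at a time. As there are finitely many balls, there is $k_0$ with the property that for $k\ge k_0$ the set $E_k$ is $\eta r_i$-close to $E$ in each $\bar B_i$, hence $2\eta r_i$-close to the full disk $D_{\pi_i,r_i}$. For such $k$ the compact rectifiable set $E_k\cap\bar B_i$ spans $\partial D_{\pi_i,r_i}=\pi_i\cap\partial B(x_i,r_i)$, i.e.\ it admits no Lipschitz map into that sphere fixing it; granting this, the hypothesis $F\in\mathfrak{F}$ applies with $S=E_k\cap\bar B_i$ and yields
\[
\mathbb{F}_{\kappa}(E_k\cap\bar B_i)\ \ge\ \mathbb{F}_{\kappa}(D_{\pi_i,r_i})-\varepsilon(x_i,r_i)\,r_i^{d}\ \ge\ (F(x_i,\pi_i)-2\eta)\,\omega_d r_i^{d}.
\]
Summing over the disjoint balls, using that $\mathbb{F}_{\kappa}$ is superadditive on disjoint pieces of $E_k$, and comparing with the upper bounds of the previous paragraph (the terms $\H^{d}((E\sm E')\cap B_i)$ summing to at most $\epsilon$, and the balls covering $E\cap K$ up to measure $\epsilon$), I obtain for $k\ge k_0$
\[
\mathbb{F}_{\kappa}(E_k)\ \ge\ \sum_i\mathbb{F}_{\kappa}(E_k\cap\bar B_i)\ \ge\ \mathbb{F}_{\kappa}(E\cap K)-C\eta\sum_i\omega_d r_i^{d}-C'\epsilon.
\]
Since the $\bar B_i$ are disjoint and Ahlfors-regular, $\sum_i\omega_d r_i^{d}\le C\,\H^{d}(E\cap K)$ is bounded, so letting $k\to\infty$, then $\eta\to0$, then $\epsilon\to0$, and finally exhausting $\Omega$ by compact $K$ gives $\mathbb{F}_{\kappa}(E)\le\liminf_k\mathbb{F}_{\kappa}(E_k)$.

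I expect the main obstacle to be the topological spanning claim that licenses the $\mathfrak{F}$-inequality: that Hausdorff-closeness of $E_k\cap\bar B_i$ to the full disk forces it to span $\partial D_{\pi_i,r_i}$. This should follow from a projection/degree argument---the orthogonal projection onto $\pi_i$ sends $E_k\cap\bar B_i$ essentially onto the whole disk, so a Lipschitz retraction to the boundary sphere is impossible---but making this quantitative and uniform across the finitely many balls is the delicate point. A secondary, more routine, input is the regularity of the limit (rectifiability, local Ahlfors-regularity, density one, and the measurable tangent field), on which the good covering and the comparison $\mathbb{F}_{\kappa}(E\cap B_i)\approx F(x_i,\pi_i)\omega_d r_i^{d}$ rest; it is exactly this structure that upgrades the bare Dal Maso--Morel--Solimini lower semicontinuity of $\H^{d}$ to the direction-sensitive functional $\mathbb{F}_{\kappa}$.
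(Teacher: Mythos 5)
Your overall architecture (quasiminimal limit theory for the reductions; Lusin plus Vitali to localize onto small disjoint balls where $E$ looks like a disk $D_{\pi_i,r_i}$ and $F$ is nearly constant; the defining property of $\mathfrak{F}$ on each ball against the approximating sets; summation over the balls) is the same as the paper's. But there is a genuine gap at the step you yourself flag as the main obstacle, and the repair you propose cannot work. You apply the $\mathfrak{F}$-inequality with $S=E_k\cap\overline{B}_i$ and claim that Hausdorff closeness of $E_k\cap\overline{B}_i$ to the disk forces non-retractability onto $\partial D_{\pi_i,r_i}$ via a projection/degree argument. This is false for general sets: a disk with an arbitrarily small hole is Hausdorff-close to the full disk and has dense projection onto $\pi_i$, yet it admits a Lipschitz retraction onto $\partial D_{\pi_i,r_i}$ fixing $\partial D_{\pi_i,r_i}$ (an annulus retracts onto its outer boundary); in dimension $d=1$, a diameter with a tiny gap retracts onto its two endpoints. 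So no argument based on Hausdorff proximity alone can license the $\mathfrak{F}$-inequality; the quasiminimality of $E_k$ must enter this very step, and in your proposal it never does.

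The paper supplies exactly the missing mechanism, and it is the heart of its proof. Note that one cannot even run the natural contradiction directly on $E_k\cap\overline{B}_i$: if $\varphi_k$ retracts $E_k\cap\overline{B}_i$ onto $\partial D_{\pi_i,r_i}$, you cannot glue $\varphi_k$ with the identity outside $B_i$ to build a competitor for quasiminimality, because $E_k$ crosses $\partial B_i$ at points far from $\partial D_{\pi_i,r_i}$, where the glued map would be discontinuous on $E_k$. The paper therefore first applies a pinching map $\varphi_{x,\rho,r}$, equal to the identity on $B(x,(1-\rho)r)$ and flattening $E_k$ onto the tangent plane $\pi$ in an annulus (with Lipschitz constant at most $2+3h/\rho$ on the slab $A_h$ containing $E_k\cap\overline{B(x,r)}$ for $k$ large), and applies the $\mathfrak{F}$-property to the prepared set $S_{k,r''}=\varphi_{x,\rho,r}(E_k)\cap\overline{B(x,r'')}$ instead. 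For this set the gluing is benign: a hypothetical retraction extends by Kirszbraun's theorem to a global Lipschitz map equal to the identity outside $B(x,r')$ and on the planar annulus, and composing with $\varphi_{x,\rho,r}$ yields a deformation pushing $E_k\cap B(x,r)$ into the image of a thin annulus. Quasiminimality of $E_k$, the Dal Maso--Morel--Solimini lower bound $\H^d(E\cap B(x,r))\leq\liminf_k\H^d(E_k\cap B(x,r))$, the upper bound $\limsup_k\H^d(E_k\cap\overline{B(x,r)}\sm B(x,r'''))\leq M\H^d(E\cap\overline{B(x,r)}\sm B(x,r'''))$, and the density estimates for $E$ then give $\H^d(E\cap\overline{B(x,r)})\leq M^2 3^d\varepsilon\,\H^d(E\cap\overline{B(x,r)})$, a contradiction since $M^2 3^d\varepsilon<1$. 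This costs an extra bookkeeping step later ($\mathbb{F}_{\kappa}(S_{k,r''})$ is compared to $\mathbb{F}_{\kappa}(E_k\cap B(x,r'''))$ plus a small annulus term, again controlled by quasiminimality), which your write-up avoids only because it asserted the unjustified claim. To complete your proof you would need to insert this pinching-plus-Kirszbraun construction; the rest of your covering and summation scheme then goes through essentially as in the paper.
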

\begin{proof}
	We may suppose that 
	\[
	\liminf_{k\to +\infty}\mathbb{F}_{\kappa}(E_{k})<+\infty.
	\]
	In particular 
	\[
	\H^{d}(E)\leq \liminf_{k\to +\infty}\H^{d}(E_{k})\leq 
	\frac{1}{\min\{\kappa, \inf F\}}\liminf_{k\to +\infty}\mathbb{F}_{\kappa}(E_{k})<+\infty.
	\]

	We take $0<\varepsilon<\frac{1}{2}$, $\varepsilon'>0$ and $\rho\in (0,1)$
	such that $M^{2}3^{d}\varepsilon<1$, $\varepsilon'<\frac{\varepsilon}{8}$ 
	and $1-(1-\rho)^{d}<\frac{\varepsilon}{2}$.

	Applying Theorem 4.1 in~\cite{David:2003}, we get that
	$E\in {\bf QM}(\Omega, M,\delta,\H^{d})$, hence rectifiable (see
	\cite{Almgren:1976}), then by Theorem 17.6 in~\cite{Mattila:1995}, 
	for $\H^{d}$-a.e. $x\in E$, 
	\[
	\lim_{r \to 0}\frac{\H^{d}(E\cap B(x,r))}{\omega_{d}r^{d}}=1,
	\]
	where $\omega_{d}$ denote the Hausdorff measure of $d$-dimensional unit
	ball. So we can find a set
	$E'\subset E$ with $\H^{d}(E\setminus E')=0$ such that for any $x\in E'$ 
	there exists $r'(\varepsilon',x)>0$, 
	\[
	(1-\varepsilon')\omega_{d}r^{d}\leq\H^{d}(E\cap
	B(x,r))\leq(1+\varepsilon')\omega_{d}r^{d},
	\]
	for all $0<r<r'(\varepsilon',x)$.

	Then 
	\begin{align*}
		\H^{d}(E\cap B(x,r)\setminus B(x,(1-\rho)r))&\leq(1+\varepsilon')
		\omega_{d}r^{d}-(1-\varepsilon')\omega_{d}(1-\rho)^{d}r^{d}\\
		&=\frac{(1+\varepsilon')-(1-\varepsilon')(1-\rho)^{d}}{1-\varepsilon'}
		(1-\varepsilon')\omega_{d}r^{d}\\
		&\leq\left( \frac{2\varepsilon'}{1-\varepsilon'}+\left( 1-(1-\rho)^{d}
		\right) \right)\H^{d}(E\cap B(x,r))\\
		&\leq \varepsilon\H^{d}(E\cap B(x,r)).
	\end{align*}

	Since $E$ is quasiminimal, by Proposition 4.1 in~\cite{DS:2000}, we know
	that $E$ is local Ahlfors regular, since $E$ is rectifiable and
	$\H^{d}(E)<+\infty$, by lemma~\ref{le:QMAR}, we have that for $\H^{d}$-a.e.
	$x\in E$, $E$ has a tangent plane $T_{x}E$ at $x$, so we can find 
	$E''\subset E'$ with $\H^{d}(E'\setminus E'')=0$ such that for all
	$\varepsilon''>0$ and for all $x\in E''$ there exists 
	$r''(\varepsilon'',x)>0$ such that for all $0<r<r''(\varepsilon'',x)$, 
	\[
	E\cap B(x,r)\subset\mathcal{C}(x,r,\varepsilon''),
	\]
	where 
	\[
	\mathcal{C}(x,r,\varepsilon'')=\left\{ y\in \overline{B(x,r)} \midd
	\dist(y, T_{x}E)\leq \varepsilon''\left\vert x-y\right\vert\right\}.
	\]

	We consider the function
	$\psi_{\rho,r}:\mathbb{R}\to\mathbb{R}$ defined by 
	\[
	\psi_{\rho, r}(t)=
	\begin{cases}
		0,& t\leq (1-\rho)r\\
		\frac{3}{\rho r}\left( t-(1-\rho)r
		\right),&(1-\rho)r<t\leq(1-\frac{2\rho}{3})r\\
		1,& (1-\frac{2\rho}{3})r<t\leq (1-\frac{\rho}{3})r\\
		-\frac{3}{\rho r}(t-r),&(1-\frac{\rho}{3})r<t\leq r\\
		0,&t>r,
	\end{cases}
	\]
	It is easy to see that $\psi_{\rho,r}$ is a Lipschitz map with Lipschitz
	constant $\frac{3}{\rho r}$.

	We take the Lipschitz map $\varphi_{x,\rho,r}:\mathbb{R}^{n}\to\mathbb{R}^{n}$
	given by
	\[
	\varphi_{x,\rho,r}(y)=\psi_{\rho,r}(\left\vert
	y-x\right\vert)\Pi(y)+(1-\psi_{\rho,r}(\left\vert y-x\right\vert))y,
	\]
	where we denote by $\Pi:\mathbb{R}^{n}\to T_{x}E$ the orthogonal projection.
	It is easy to check that 
	\[
	\varphi_{x,\rho,r}\vert_{B(x,(1-\rho)r)}=\id_{B(x,(1-\rho)r)}
	\]
	and
	\[
	\varphi_{x,\rho,r}\vert_{B(x,r)^{c}}=\id_{B(x,r)^{c}}.
	\]

	Let $\varepsilon''$ and $h$ be such that $\varepsilon''<\frac{\rho}{3}$ and
	$0<\varepsilon''<h<\frac{\rho}{3}$, and put 
	\[
	A_{h}=\left\{ y\in \overline{B(x,r)}\midd \dist(y,T_{x}E)\leq hr \right\},
	\]
	then $\mathcal{C}(x,r;\varepsilon'')\subset A_{h}$. We will show that
	\[
	\Lip\left( \varphi_{x,\rho,r}\vert_{A_{h}}\right)\leq 2+\frac{3h}{\rho}.
	\]
	We set 
	\[
	\Pi^{\perp}(y)=y-\Pi(y),\ y\in \mathbb{R}^{n},
	\]
	then
	\[
	\left\vert\Pi^{\perp}(y) \right\vert\leq hr,\ \forall y\in A_{h}.
	\]
	For any $y_{1},y_{2}\in A_{h}$,
	\[
	\begin{split}
		\varphi_{x,\rho,r}(y_{1})-\varphi_{x,\rho,r}(y_{2})&=y_{1}-y_{2}+
		\psi_{\rho,r}(\left\vert y_{1}-x\right\vert)\Pi^{\perp}(y_{1})-
		\psi_{\rho,r}(\left\vert y_{2}-x\right\vert)\Pi^{\perp}(y_{2})\\
		&=(y_{1}-y_{2})+\psi_{\rho,r}(\left\vert
		y_{1}-x\right\vert)\left(\Pi^{\perp}(y_{1})-\Pi^{\perp}(y_{2})\right)\\
		&\qquad+\left(\psi_{\rho,r}(\left\vert y_{1}-x\right\vert)-
		\psi_{\rho,r}(\left\vert	y_{2}-x\right\vert)\right)\Pi^{\perp}(y_{2}),
	\end{split}
	\]
	thus
	\begin{align*}
		\left\vert \varphi_{x,\rho,r}(y_{1})-\varphi_{x,\rho,r}(y_{2})\right\vert&\leq \left\vert
		y_{1}-y_{2}\right\vert+\left\vert y_{1}-y_{2}\right\vert+\frac{3}{\rho
		r}\left\vert \left\vert y_{1}\right\vert-\left\vert
		y_{2}\right\vert\right\vert rh\\
		&\leq\left(2+\frac{3h}{\rho} \right)
		\left\vert y_{1}-y_{2}\right\vert,
	\end{align*}
	and we get that 
	\[
	\Lip\left( \varphi_{x,\rho,r}\vert_{A_{h}}\right)\leq 2+\frac{3h}{\rho}.
	\]

	Since $E_{k}\to E$ in $\Omega$, and $\overline{B(x,r)}\subset \Omega$ and 
	\[
	E\cap \overline{B(x,r)}\subset\mathcal{C}(x,r,\varepsilon'')\subset A_{h},
	\]
	there exist a number $k_{h}$ such that for $k\geq k_{h}$,
	\[
	E_{k}\cap \overline{B(x,r)}\subset A_{h}.
	\]

	Since 
	\[
	\varphi_{x,\rho,r}\vert_{B(x,r)^{c}}=\id_{B(x,r)^{c}}
	\]
	and
	\[
	\varphi_{x,\rho,r}(B(x,r))\subset B(x,r),
	\]
	we have that 
	\[
	\varphi_{x,\rho,r}(E_{k}\cap B(x,r))=\varphi_{x,\rho,r}(E_{k})\cap B(x,r).
	\]
	We put $r'=\left( 1-\frac{\rho}{3} \right)r$, $r''=\left( 1-\frac{2\rho}{3}
	\right)r$, $r'''=(1-\rho)r$, $\pi=T_{x}E$. Note that 
	\[
	\partial B\left( x,r'\right)\cap\pi\subset \varphi_{x,\rho,r}(E_{k})
	\]
	and 
	\[
	\varphi_{x,\rho,r}(E_{k})\cap B\left(x,r'\right)\subset B\left(x,r''\right)
	\cup \left( \left( B(x,r')\setminus B(x,r'')\right)\cap\pi \right).
	\]
	We put
	\[
	D_{\pi,r''}=\overline{B\left( x,r'' \right)}\cap\pi
	\]
	and 
	\[
	S_{k,r''}=\varphi_{x,\rho,r}(E_{k})\cap\overline{B\left(x,r''\right)}.
	\]
	We will show that for any Lipschitz mapping 
	$\varphi:\mathbb{R}^{n}\to\mathbb{R}^{n}$ which is identity on  $\partial
	D_{\pi,r''}$
	cannot map $S_{k,r''}$ into $\partial D_{\pi,r''}$.
	Suppose not, that is, there is a Lipschitz map
	$\varphi_{k}:\mathbb{R}^{n}\to\mathbb{R}^{n}$ such that
	\[
	\varphi_{k}\vert_{\partial D_{\pi,r''}}=\id_{\partial D_{\pi,r''}}
	\]
	and
	\[
	\varphi_{k}(S_{k,r''})\subset\partial D_{\pi,r''}.
	\]
	We consider the map 
	\[
	\tilde{\phi}_{k}:B(x,r')^{c}\cup\left[\left( B(x,r')
	\setminus B(x,r'')\right)\cap \pi\right]\cup B(x,r'')\to\mathbb{R}^{n}
	\]
	defined by 
	\[
	\tilde{\phi}_{k}(x)=\begin{cases}
		x,&x\in B(x,r')^{c}\cup\left[\left( B(x,r')
		\setminus B(x,r'')\right)\cap \pi\right]\\
		\varphi_{k}(x),&x\in B(x,r'').
	\end{cases}
	\]
	It is easy check that $\tilde{\phi}_{k}$ is a Lipschitz map, by Kirszbraun's
	theorem, see for example \cite[2.10.43 Kirszbraun's theorem]{Federer:1969}, we can get a Lipschitz map 
	$\phi_{k}:\mathbb{R}^{n}\to\mathbb{R}^{n}$ such that 
	\[
	\phi_{k}\vert_{B( x,r'')}=\varphi_{k}\vert_{B(x,r'')}
	\]
	and
	\[
	\phi_{k}\vert_{B(x,r')^{c}}=\id_{B(x,r')^{c}},
	\]
	and 
	\[
	\phi_{k}\vert_{\left( B(x,r') \setminus B(x,r'')\right)\cap \pi}=
	\id_{\left( B(x,r') \setminus B(x,r'')\right)\cap \pi}.
	\]
	By the construction of $\phi_{k}$, we have that
	\[
	\phi_{k}(S_{k,r''})=\tilde{\phi}_{k}(S_{k,r''})\subset\partial D_{\pi,r''}.
	\]
	Since 
	\[
 \varphi_{x,\rho,r}(E_{k})\cap
	\left(B(x,r')\setminus B(x,r'')\right) \subset \left(B(x,r')\setminus
	B(x,r'')\right)\cap \pi
	\]
we have that 
	\[
	\phi_{k}\left( \varphi_{x,\rho,r}(E_{k})\cap
	B(x,r')\setminus B(x,r'') \right)\subset \left(B(x,r')\setminus
	B(x,r'')\right)\cap \pi.
	\]
	Thus
	\begin{align*}
	\phi_{k}(\varphi_{x,\rho,r}(E_{k}\cap
	B(x,r)))&=\phi_{k}(\varphi_{x,\rho,r}(E_{k})\cap B(x,r))\\
	&\subset\varphi_{x,\rho,r}
	(E_{k})\cap \left(B(x,r)\setminus B\left(x,r''\right)\right)\\
	&\subset\varphi_{x,\rho,r}\left( E_{k}\cap B(x,r)\setminus B(x,r'') \right).
\end{align*}
	Since $\H^{d}(E)<\infty$, we have that $\H^{d}(E\cap \partial B(x,r))=0$ for
	almost everywhere $r\in (0,r''(\varepsilon'',x))$, 
	if we take any $r\in (0,r''(\varepsilon'',x))$
	with $\H^{d}(E\cap \partial B(x,r))=0$ and $r<\delta$, then we have the following
	inequality:
	\begin{align*}
		\H^{d}\left(E\cap \overline{B(x,r)}\right)&=\H^{d}(E\cap B(x,r))\\
		&\leq\liminf_{k\to+\infty}\H^{d}(E_{k}\cap B(x,r))\\
		&\leq\liminf_{k\to+\infty}M\H^{d}\left(\phi_{k}\circ\varphi_{x,\rho,r}
		(E_{k}\cap B(x,r))\right)\\
		&\leq\liminf_{k\to+\infty}M\H^{d}\left(\varphi_{x,\rho,r}(E_{k}\cap
		B(x,r)\setminus B(x,r'''))\right)\\
		&\leq\liminf_{k\to+\infty}M\H^{d}\left(\varphi_{x,\rho,r}(E_{k}\cap
		\overline{B(x,r)}\setminus B(x,r'''))\right)\\
		&\leq\liminf_{k\to+\infty}M\left( 2+\frac{3h}{\rho}
		\right)^{d}\H^{d}\left(E_{k}\cap \overline{B(x,r)}\setminus
		B(x,r''')\right)\\
		&\leq M\left( 2+\frac{3h}{\rho}\right)^{d}\limsup_{k\to+\infty}
		\H^{d}\left(E_{k}\cap \overline{B(x,r)}\setminus B(x,r''')\right)\\
		&\leq M\left( 2+\frac{3h}{\rho}\right)^{d}\cdot M
		\H^{d}\left(E\cap \overline{B(x,r)}\setminus B(x,r''')\right)\\
		&\leq M^{2}\left( 2+\frac{3h}{\rho} \right)^{d}\varepsilon\H^{d}\left(E\cap
		\overline{B(x,r)}\right)\\
		&\leq M^{2}3^{d}\varepsilon\H^{d}\left(E\cap \overline{B(x,r)}\right).
	\end{align*}
	This is a contradiction since $M^{2}3^{d}\varepsilon<1$ and
	$\H^{d}\left(E\cap \overline{B(x,r)}\right)>0$.

	Since $F\in\mathfrak{F}$, by the definition, we have that 
	\[
	\mathbb{F}_{\kappa}(D_{\pi,r''})\leq \mathbb{F}_{\kappa}(S_{k,r''})+\varepsilon(x,r'')(r'')^{d}.
	\]

	Since $E$ is a $d$-rectifiable set and $\H^{d}(E)<+\infty$, the function
	$f:E\to G(n,d)$ defined by $f(x)=T_{x}E$ is $\H^{d}$-measurable. By Lusin's
	theorem, see for example~\cite[2.3.5. Lusin's theorem]{Federer:1969}, we can
	find a closed set $N\subset E$ with
	$\H^{d}(E\setminus N)<\varepsilon$ such that $f$ restricted to $N$
	is continuous.
	We put $E'''=(E''\cap N)$, then $E'''\subset E$ and
	\[
	\H^{d}(E\setminus E''')<\varepsilon,
	\]
	by Lemma 15.18 in~\cite{Mattila:1995}, we have that for $\H^{d}$-a.e. $x\in
	E'''$,
	\[
	T_{x}E'''=T_{x}N=T_{x}E.
	\]
	The map $\tilde{f}:E'''\to \mathbb{R}^{n}\times G(n,d)$ given by
	$\tilde{f}(x)=(x,T_{x}E)$ is continuous. Since $F$ is continuous, thus the function
	$F\circ\tilde{f}:E'''\to\mathbb{R}$ is continuous, for any $x\in E'''$, we
	can find $r(\varepsilon,x)>0$ such that 
	\[
	(1-\varepsilon)F(x,T_{x}E)\leq F(y,T_{y}E)\leq(1+\varepsilon)F(x,T_{x}E),
	\]
	for any $y\in E'''\cap B(x,r(\varepsilon,x))$. Thus, for all
	$0<r<r(\varepsilon,x)$, 
	\[
	(1-\varepsilon)\mathbb{F}_{\kappa}(T_{x}E\cap	B(x,r))\leq
	\mathbb{F}_{\kappa}(E'''\cap B(x,r))\leq(1+\varepsilon)\mathbb{F}_{\kappa}(T_{x}E\cap	B(x,r)).
	\]

	For any $x\in\mathbb{R}^{n}$, there exists $r'''(\varepsilon,x)>0$ such that
	$\varepsilon(x,r)<\varepsilon$ for all $0<r<r'''(\varepsilon,x)$.
	We put
	\[
	r(x)=\min(r(\varepsilon,x),r'(\varepsilon',x),r''(\varepsilon'',x),
	r'''(\varepsilon,x),\delta), \text{ for }x\in E''',
	\]
	then 
	\[
	\left\{B(x,r)\midd x\in E''', 0<r<r(x),
	\H^{d}(E\cap\partial B(x,r))=0 \right\}
	\]
	is a Vitali covering of $E'''$, so we can find a countable family of balls
	$(B_{i})_{i\in J}$ such that
	\[
	\H^{d}\left(E'''\setminus\bigcup_{i\in J}B_{i}\right)=0,
	\]
	and
	\[
	\sum_{i\in J}(r_{i})^{d}<\H^{d}(E''')+\varepsilon.
	\]

	We choose a finite set $I\subset J$ such that 
	\[
	\H^{d}\left(E'''\setminus\bigcup_{i\in I}B_{i}\right)<\varepsilon.
	\]

	We assume that $B_{i}=B(x_{i},r_{i})$. We put 
	\[
	\varphi=\prod_{i\in I}\varphi_{x_{i},\rho,r_{i}}.
	\]
	Since $\varphi\vert_{B_{i}}=\varphi_{x_{i},\rho,r_{i}}\vert_{B_{i}}$, we
	have that $\varphi\vert_{B(x_{i},r_{i}''')}=\id_{B(x_{i},r_{i}''')}$ and
	\[
	\varphi(E_{k})\cap B(x_{i},r_{i}'')\setminus
	B(x_{i},r_{i}''')\subset\varphi(E_{k}\cap B(x_{i},r_{i})\setminus
	B(x_{i},r_{i}'''))
	\]
	and
	\[
	\pi_{i}\cap B=\pi_{i}\cap\left( \left( B(x_{i},r_{i})\setminus
	B(x_{i},r_{i}'') \right)\cup\left( B(x_{i},r_{i}'')\setminus 
	B(x_{i},r_{i}''') \right)\cup\left( B(x_{i},r_{i}''') \right) \right),
	\]
	so we get that
	\[
	\mathbb{F}_{\kappa}(E''')=\sum_{i\in J}\mathbb{F}_{\kappa}(E'''\cap B_{i})\leq \sum_{i\in
	I}\mathbb{F}_{\kappa}(E'''\cap B_{i})+(\sup F)\varepsilon.
	\]
	For any $i\in I$,
	\begin{align*}
		\mathbb{F}_{\kappa}(E'''\cap B_{i})&\leq(1+\varepsilon)\mathbb{F}_{\kappa}(\pi_{i}\cap
		B_{i})\\
		&\leq(1+\varepsilon)\Big(\mathbb{F}_{\kappa}\left(\pi_{i}\cap B(x_{i},r_{i})
		\setminus B(x_{i},r_{i}'') \right)+\mathbb{F}_{\kappa}(\pi_{i}\cap
		B(x_{i},r_{i}''))\Big)\\
		&\leq(1+\varepsilon)\Big( 
		\mathbb{F}_{\kappa}(S_{k,r''})+ \varepsilon(x,r_{i}'')(r_{i}'')^{d}+(\sup
		F)(r_{i}^{d}-(r_{i}'')^{d})\Big)\\
		&\leq (1+\varepsilon) \mathbb{F}_{\kappa}(S_{k,r''})+2\varepsilon(r_{i}'')^{d}+
		2(\sup F)\left((r_{i}^{d}-(r_{i}'')^{d})\right)\\
		&\leq(1+\varepsilon)\mathbb{F}_{\kappa}(E_{k}\cap B(x_{i},r_{i}'''))\\
		&\quad+(1+\varepsilon)\mathbb{F}_{\kappa}(\varphi(E_{k}\cap B(x_{i},r_{i})\setminus
		B(x_{i},r_{i}''')))\\
		&\quad+\left(2\varepsilon +2(\sup F)\left(1-\left(
		1-\frac{2\rho}{3}\right)^{d}\right)\right)r_{i}^{d}\\
		&\leq(1+\varepsilon)\mathbb{F}_{\kappa}(E_{k}\cap B(x_{i},r_{i}'''))\\
		&\quad+2(\sup F)(\Lip
		\varphi)^{d}\H^{d}(E_{k}\cap B(x_{i},r_{i})\setminus B(x_{i},r_{i}'''))\\
		&\quad+\left(2\varepsilon +2(\sup F)\cdot
		\frac{\varepsilon}{2}\right)r_{i}^{d}.
	\end{align*}
	Hence
	\begin{align*}
		\mathbb{F}_{\kappa}(E''')&\leq\sum_{i\in I}\mathbb{F}_{\kappa}(E'''\cap B_{i})+(\sup
		F)\varepsilon\\
		&\leq(1+\varepsilon)\mathbb{F}_{\kappa}(E_{k})+\left(2\varepsilon 
		+(\sup F)\varepsilon\right)(\H^{d}(E''')+ \varepsilon)+(\sup
		F)\varepsilon\\
		&\quad+2(\sup F)(\Lip
		\varphi)^{d}\sum_{i\in I}\H^{d}\left(E_{k}\cap
		\overline{B(x_{i},r_{i})}\setminus B(x_{i},r_{i}''')\right),
	\end{align*}
	thus
	\begin{align*}
		\mathbb{F}_{\kappa}(E''')&\leq\liminf_{k\to+\infty}(1+\varepsilon)\mathbb{F}_{\kappa}(E_{k})
		+\left(2\varepsilon +(\sup
		F)\varepsilon\right)(\H^{d}(E''')+\varepsilon)+(\sup F)\varepsilon\\
		&\quad+2(\sup F)(\Lip \varphi)^{d}\liminf_{k\to+\infty}
		\sum_{i\in I}\H^{d}\left(E_{k}\cap \overline{B(x_{i},r_{i})}
		\setminus B(x_{i},r_{i}''')\right)\\
		&\leq\liminf_{k\to+\infty}(1+\varepsilon)\mathbb{F}_{\kappa}(E_{k})+
		(2\varepsilon+(\sup F)\varepsilon)(\H^{d}(E''')+\varepsilon)+(\sup
		F)\varepsilon\\
		&\quad+2(\sup F)(\Lip\varphi)^{d}M\sum_{i\in I}\H^{d}\left(E\cap
		\overline{B(x_{i},r_{i})}\setminus B(x_{i},r_{i}''')\right)\\
		&\leq\liminf_{k\to+\infty}(1+\varepsilon)\mathbb{F}_{\kappa}(E_{k})+
		(2\varepsilon+(\sup F)\varepsilon)(\H^{d}(E''')+\varepsilon)+(\sup
		F)\varepsilon\\
		&\quad+2(\sup F)(\Lip\varphi)^{d}M\varepsilon\H^{d}(E),
	\end{align*}
	and 
	\begin{align*}
		\mathbb{F}_{\kappa}(E)&=\mathbb{F}_{\kappa}(E''')+\mathbb{F}_{\kappa}(E\setminus E''')\\
		&\leq\mathbb{F}_{\kappa}(E''')+(\sup F)\H^{d}(E\setminus E''')\\
		&\leq(1+\varepsilon)\liminf_{k\to+\infty}\mathbb{F}_{\kappa}(E_{k})\\
		&\quad+\Big( \left( 2+\sup F+2\sup F (\Lip \varphi)^{d}M
		\right)(\H^{d}(E)+\varepsilon) +2\sup F\Big)\varepsilon.
	\end{align*}
	We can let $\varepsilon$ tend to 0, we get that 
	\[
	\mathbb{F}_{\kappa}(E)\leq\liminf_{k\to+\infty}\mathbb{F}_{\kappa}(E_{k}).
	\]
\end{proof}

The following proposition is taken from~\cite[3.1 Proposition]{Pauw:2007a}.
\begin{proposition}\label{prop:HLS}
	Let $B\subset\mathbb{R}^{n}$ be a compact subset. Suppose that for
	$j=1,2,\ldots$, $S_{j}\subset\mathbb{R}^{n}$ is a compact set with
	$B\subset S_{j}$, and that $S_{j}$ converge in Hausdorff distance to a
	compact set $S\subset\mathbb{R}^{n}$. Let $L\subset\check{H}_{k-1}(B;G)$ be 
	a subgroup such that $L\subset\ker\check{H}_{k-1}(i_{B,S_{j}})$.
	Then $L\subset \ker\check{H}_{k-1}(i_{B,S})$. 
\end{proposition}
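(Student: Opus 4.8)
The plan is to use the one property that makes \v{C}ech homology preferable to singular homology here: its continuity under inverse limits of compacta (the Eilenberg--Steenrod continuity axiom, see \cite{ES:1952}). For each integer $m\geq 1$ I set
\[
V_{m}=\{x\in\mathbb{R}^{n}\mid \dist(x,S)\leq 1/m\}.
\]
Each $V_{m}$ is compact, the sequence is decreasing, and $S=\bigcap_{m}V_{m}$, so $S$ is the inverse limit of the $V_{m}$ along the inclusions. Continuity of \v{C}ech homology then identifies
\[
\check{H}_{k-1}(S;G)=\varprojlim_{m}\check{H}_{k-1}(V_{m};G),
\]
where the projection onto the $m$-th factor is the map $\check{H}_{k-1}(i_{S,V_{m}})$ induced by the inclusion $S\hookrightarrow V_{m}$. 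This is the only genuinely non-formal input, and I expect it to be the crux: for singular homology the analogous isomorphism fails, which is exactly why the statement is confined to the \v{C}ech theory.

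Next I would fix a class $\sigma\in L$ and reduce the goal $\check{H}_{k-1}(i_{B,S})(\sigma)=0$ to a coordinatewise check. Since the inverse limit sits inside the product $\prod_{m}\check{H}_{k-1}(V_{m};G)$, an element vanishes precisely when every coordinate does; so it is enough to show that the image of $\check{H}_{k-1}(i_{B,S})(\sigma)$ in each factor $\check{H}_{k-1}(V_{m};G)$ is $0$. By functoriality applied to $i_{B,V_{m}}=i_{S,V_{m}}\circ i_{B,S}$, that image equals $\check{H}_{k-1}(i_{B,V_{m}})(\sigma)$, so the whole problem becomes: for every $m$, prove $\check{H}_{k-1}(i_{B,V_{m}})(\sigma)=0$. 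Working coordinatewise is what sidesteps any $\varprojlim^{1}$ subtlety one might fear.

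Finally I would fix $m$ and bring in the Hausdorff convergence. Because $S$ is compact and $S_{j}\to S$, there is an index $j$ with $S_{j}\subset\{x\mid\dist(x,S)<1/m\}\subset V_{m}$, giving the chain of inclusions $B\subset S_{j}\subset V_{m}$. Functoriality applied to $i_{B,V_{m}}=i_{S_{j},V_{m}}\circ i_{B,S_{j}}$ yields
\[
\check{H}_{k-1}(i_{B,V_{m}})(\sigma)=\check{H}_{k-1}(i_{S_{j},V_{m}})\big(\check{H}_{k-1}(i_{B,S_{j}})(\sigma)\big),
\]
and the inner term is $0$ because $\sigma\in L\subset\ker\check{H}_{k-1}(i_{B,S_{j}})$ by hypothesis. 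Hence $\check{H}_{k-1}(i_{B,V_{m}})(\sigma)=0$ for every $m$, so $\check{H}_{k-1}(i_{B,S})(\sigma)=0$, and since $\sigma\in L$ was arbitrary we conclude $L\subset\ker\check{H}_{k-1}(i_{B,S})$. Apart from the continuity isomorphism, every step is pure functoriality together with the elementary fact that a Hausdorff-convergent sequence eventually enters any fixed closed neighborhood of its limit.
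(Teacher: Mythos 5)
Your proof is correct: reducing to the closed neighborhoods $V_{m}$, invoking the continuity (inverse-limit) property of \v{C}ech homology from \cite{ES:1952}, and then using functoriality plus the fact that $S_{j}$ eventually lies in each $V_{m}$ is exactly the intended mechanism, and it coincides with the argument the paper defers to (the paper omits the proof, citing Proposition 3.1 of \cite{Pauw:2007a}, which likewise rests on \v{C}ech continuity along a decreasing sequence of compacta shrinking to $S$). The only cosmetic difference is the choice of the decreasing family --- closed $1/m$-neighborhoods here versus closures of tail unions $S\cup\bigcup_{i\geq j}S_{i}$ --- which changes nothing essential.
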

The proof of the proposition is essentially the same as the proof of 
Proposition 3.1 in \cite{Pauw:2007a}, so we omit the proof.

\begin{theorem}
	Suppose that $0<d<n$ and that $F\in\mathfrak{F}$ is integrand.
	Then there is a positive constant $M>0$ such that for all open bounded domain
	$U\subset\mathbb{R}^n$, for all closed $d$-rectifiable set $E\subset U$ and 
	for all $\epsilon>0$, we can build a $n$-dimensional complex $\mathcal{S}$ 
	and a Lipschitz map $\phi\colon\mathbb{R}^n\rightarrow\mathbb{R}^n$ 
	satisfying the following properties:
	\begin{enumerate}[{\indent\rm (1)}]
		\item $\phi\vert_{\mathbb{R}^n\setminus U}=\id_{\mathbb{R}^n\setminus U}$ and $\Vert\phi-\id_{\mathbb{R}^n}\Vert_\infty\leq\epsilon$;
		\item $\mathcal{R}(\mathcal{S})\geq M$, where $\mathcal{R}(\mathcal{S})$ 
			is the shape control of $\mathcal{S}$, for the definition 
			see~\cite[p.8]{Feuvrier:2009}
			or~\cite[D\'efinition 1.2.27]{Feuvrier:2008}; 
		\item $\phi(E)$ is contained in the union of $d$-skeleton of $\mathcal{S}$,
			and $\left\vert \mathcal{S}\right\vert\subset U$;
		\item $\mathbb{F}_{\kappa}(\phi(E))\leq (1+\epsilon)\mathbb{F}_{\kappa}(E)$.
	\end{enumerate}
\end{theorem}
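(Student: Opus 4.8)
The plan is to obtain the complex $\mathcal{S}$ and the map $\phi$ directly from Feuvrier's construction of adapted polyhedral networks \cite{Feuvrier:2009}, and then to upgrade his area estimate to the integrand estimate (4) using only the continuity and boundedness of $F$. First I would invoke Feuvrier's theorem. Given $U$, the closed $d$-rectifiable set $E\i U$, and a small auxiliary parameter $\eta>0$ (to be fixed in terms of $\epsilon$ and the modulus of continuity of $F$), his construction produces a polyhedral complex $\mathcal{S}$ with $|\mathcal{S}|\i U$, whose shape control satisfies $\mathcal{R}(\mathcal{S})\geq M$ for a constant $M=M(n,d)$ independent of $U$, $E$ and $\epsilon$, together with a Lipschitz map $\phi\colon\R^n\to\R^n$ with $\phi|_{\R^n\sm U}=\id_{\R^n\sm U}$, $\|\phi-\id_{\R^n}\|_\infty\leq\epsilon$, and $\phi(E)$ contained in the $d$-skeleton of $\mathcal{S}$. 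This already yields (1), (2) and (3), and Feuvrier's estimate gives in addition $\H^d(\phi(E))\leq(1+\eta)\H^d(E)$. The essential extra feature I would retain from his argument is its \emph{adaptedness}: the $d$-faces onto which $E$ is pushed are, at the relevant scales, nearly parallel to the approximate tangent planes of $E$, so that for $\H^d$-a.e.\ $x\in E$ the direction $T_{\phi(x)}\phi(E)$ lies within angle $\eta$ of $T_xE$.

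It remains to prove (4). Since $E$ is $d$-rectifiable, $E=E_{rec}$ and $\mathbb{F}_{\kappa}(E)=\int_E F(x,T_xE)\,d\H^d(x)$; and since $\phi$ is Lipschitz and $\phi(E)$ is contained in a finite union of $d$-dimensional polytopes, $\phi(E)$ is $d$-rectifiable as well, so $\mathbb{F}_{\kappa}(\phi(E))=\int_{\phi(E)}F\,d\H^d$. As in the proof of Theorem~\ref{thm:lsc}, I would use Lusin's theorem to find a closed set $N\i E$ with $\H^d(E\sm N)$ as small as desired, on which $x\mapsto(x,T_xE)$, and hence $x\mapsto F(x,T_xE)$, is continuous; on the complement I would use the crude bound $F\leq C$. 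On $N$, the displacement $\|\phi-\id\|_\infty\leq\epsilon$ together with the direction control $\angle(T_{\phi(x)}\phi(E),T_xE)\leq\eta$ and the uniform continuity of $F$ on the compact set $\overline U\times G(n,d)$ give $F(\phi(x),T_{\phi(x)}\phi(E))\leq(1+\epsilon)F(x,T_xE)$ once $\epsilon$ and $\eta$ are small enough, where the lower bound $c\leq F$ converts the additive closeness of $F$-values into this multiplicative one.

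Combining this pointwise comparison with Feuvrier's area bound $\H^d(\phi(E))\leq(1+\eta)\H^d(E)$ is best carried out ball by ball over a Vitali covering of $N$ by balls $B(x_i,r_i)$ on which $T_xE$ is nearly constant equal to some $\pi_i$, exactly as in Theorem~\ref{thm:lsc}. In each such ball $\phi(E)\cap B(x_i,r_i)$ lies in $d$-faces whose directions are within $\eta$ of $\pi_i$, so that $\int_{\phi(E)\cap B(x_i,r_i)}F\leq(1+\epsilon)F(x_i,\pi_i)\,\H^d(E\cap B(x_i,r_i))\leq(1+\epsilon)\int_{E\cap B(x_i,r_i)}F$; summing over the covering and absorbing the exceptional term $C\,\H^d(E\sm N)$ yields $\int_{\phi(E)}F\leq(1+\epsilon)\int_E F$, which is (4) after relabelling $\epsilon$.

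The main obstacle is the direction control asserted in the first step: one must ensure that Feuvrier's adapted grid really does route the Federer--Fleming projection onto $d$-faces that are nearly parallel to the local tangent planes of $E$, uniformly at the scales where the projection acts. This is precisely the heart of Feuvrier's construction, and the care needed is to extract from it not merely the area comparison but the quantitative angle estimate $\angle(T_{\phi(x)}\phi(E),T_xE)\leq\eta$ for $\H^d$-a.e.\ $x$. Once this is in hand, the passage from the Hausdorff measure to the integrand $\mathbb{F}_{\kappa}$ is a soft consequence of the continuity and boundedness of $F$.
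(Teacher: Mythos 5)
Your ingredients are the right ones---Lusin's theorem to handle the mere measurability of $x\mapsto T_xE$, uniform continuity of $F$ on $\overline{U}\times G(n,d)$, the lower bound $\inf F>0$ to convert additive errors into multiplicative ones, and a crude bound $\sup F<\infty$ on an exceptional set---and these are exactly the ingredients of the paper's proof. The gap is in your architecture: you invoke Feuvrier's theorem as a black box \emph{first}, and only afterwards choose the Lusin set $N$ and the Vitali balls $B(x_i,r_i)$. Two things then fail. First, the direction estimate you assert, namely that $T_{\phi(x)}\phi(E)$ is within angle $\eta$ of $T_xE$ for $\H^d$-a.e.\ $x\in E$, is not part of Feuvrier's statement and is false as stated: the Federer--Fleming projection sends the part of $E$ that is not flat at the grid scale onto faces of uncontrolled direction, so the control can only hold off an exceptional set of small positive measure, and identifying that set already forces you to reopen the construction. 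Second, and more fatally for your ball-by-ball step, the radii $r_i$ are dictated by the continuity of $T_xE$ on $N$, hence may be arbitrarily small compared with both the grid scale of $\mathcal{S}$ and the fixed displacement bound $\epsilon$. A point $z\in\phi(E)\cap B(x_i,r_i)$ is $\phi(w)$ for some $w\in E$ with $\left\vert w-x_i\right\vert\leq\epsilon+r_i$, so $w$ may lie far outside $B(x_i,r_i)$ and $T_wE$ need not be close to $\pi_i$; likewise the estimate $\H^d(\phi(E))\leq(1+\eta)\H^d(E)$ is global and does not localize to balls smaller than the grid scale. Hence neither inequality in your chain
\[
\int_{\phi(E)\cap B(x_i,r_i)}F\leq(1+\epsilon)F(x_i,\pi_i)\,\H^d(E\cap B(x_i,r_i))\leq(1+\epsilon)\int_{E\cap B(x_i,r_i)}F
\]
is justified.

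The repair is to reverse the order of quantifiers, which is precisely what the paper does: the Lusin decomposition is fed \emph{into} Feuvrier's construction rather than applied after it. Feuvrier's proof uses the continuity of the multiplicity $h$ only through the local near-constancy (\ref{multiplicity}), which fixes radii $r_{\max}'(x)$ to which the adapted grid is then built. The paper's observation is that, writing $E=E'\sqcup E''$ by Lusin's theorem with $x\mapsto T_xE$ continuous on $E'$ and $\H^d(E'')\leq\epsilon'\H^d(E)$, the function $x\mapsto F(x,T_xE)$ enjoys the same local near-constancy on $E'$, with radii determined \emph{before} the grid exists; the cell-by-cell comparison in Feuvrier's proof then goes through verbatim with $F$ in place of $h$ (the angle information you were trying to extract is available there for free, since inside a good cell the retained part of $E$ is flat and parallel to the face it is pushed onto), while $E''$ is absorbed using $\sup F<\infty$ and the bounded distortion of the projection. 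If you insist on your external bookkeeping, you would have to extract from the construction both the angle control and a Jacobian bound on a quantified good set, which amounts to re-proving those internal estimates; the paper's one-paragraph modification avoids this entirely.
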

This is only a small improvement over Theorem 4.3.17
in~\cite{Feuvrier:2008} and Theorem 3 in~\cite{Feuvrier:2009}, but the
proof is almost same as that of V. Feuvrier in~\cite{Feuvrier:2008,Feuvrier:2009}.
What we only need to change is following: In the proof of V. Feuvrier, the 
multiplicity function $h$ is a continuous bounded function, thus for all 
$\epsilon'>0$ and all $x\in U$ there exists $r_{\max}'(x)>0$ such that 
\begin{equation}\label{multiplicity}
\forall y\in B(x,r_{\max}'(x)),\ (1-\epsilon')h(x)\leq h(y)\leq
(1+\epsilon')h(x).
\end{equation}
But in our paper, we use the integrand $F$ instead of $h$, and the inequality
(\ref{multiplicity}) will not be available, but this not too bad. We consider the
function $f:E\to G(n,d)$ defined by $f(x)=T_{x}E$. It is easy to
see that $f$ is measurable. By Lusin's theorem, see for 
example~\cite[2.3.5.]{Federer:1969}, we can write $E=E'\sqcup
E''$ such that $f\vert_{E'}$ is continuous and $\H^{d}(E'')\leq
\epsilon'\H^{d}(E)$. Then for all $x\in E'$, there exists $r_{\max}'(x)>0$ such
that for all $y\in E'\cap B(x,r_{\max}'(x))$, 
\[
(1-\epsilon')F(x,T_{x}E)\leq F(y,T_{y}E)\leq (1+\epsilon)F(x,T_{x}E).
\]
The rest of proof of our theorem will be the same as that 
in~\cite{Feuvrier:2008,Feuvrier:2009}.

Using this theorem, we can prove the following lemma.

\begin{lemma}\label{lemma1}
	Suppose that $0<d<n$ and that $U\subset\mathbb{R}^n$. Suppose that
	$F\in\mathfrak{F}$ is an integrand. Then there is a positive 
	constant $M'>0$ depending only on $d$ and $n$ such that
	for all relatively closed $d$-rectifiable set $E\subset U$,
	for all relatively compact subset $V\subset{U}$ and for all $\epsilon>0$,
	we can find a $n$-dimensional complex $\mathcal{S}$ and a subset 
	$E''\subset U$ satisfying the following properties:
	\begin{enumerate}[{\indent \rm (1)}]
		\item $E''$ is a $\diam(U)$-deformation of $E$ over $U$ and by putting
			$W=\mathring{\left\vert \mathcal{S}\right\vert}$ we have $V\subset
			W\subset\overline{W}\subset U$ and 
			there is a $d$-dimensional skeleton $\mathcal{S}'$ of $\mathcal{S}$ such
			that $E''\cap\overline{W}=\left\vert\mathcal{S}'\right\vert$;
		\item $\mathbb{F}_{\kappa}(E'')\leq(1+\epsilon)\mathbb{F}_{\kappa}(E)$;
		\item there are $d+1$ complexes $\mathcal{S}^0,\ldots,\mathcal{S}^d$ such
			that $\mathcal{S}^{\ell}$ is contained in the $\ell$-skeleton of 
			$\mathcal{S}$ and there is a decomposition
			\[
			E''\cap W=E^d\sqcup E^{d-1}\sqcup\ldots\sqcup E^0,
			\]
			where for each $0\leq \ell \leq d$,
			\[
			E^{\ell}\in {\bf QM}(W^{\ell},M',\diam(W^{\ell}),\H^{\ell}),
			\]
			where 
			\begin{align*}
				\begin{cases}
					W^{d}=W\\
					W^{\ell-1}=W^{\ell}\setminus E^{\ell}
				\end{cases}
				&&
				\begin{cases}
					E^{d}=\left\vert\mathcal{S}^{d}\right\vert\cap W^{d}\\
					E^{\ell}=\left\vert\mathcal{S}^{\ell}\right\vert\cap W^{\ell}.
				\end{cases}
			\end{align*}
	\end{enumerate}
\end{lemma}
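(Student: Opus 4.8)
The plan is to bootstrap Lemma~\ref{lemma1} from the immediately preceding theorem by applying it iteratively, peeling off one dimension of skeleton at a time, from the top dimension $d$ down to $0$. First I would invoke the preceding theorem with the given $E$, $\epsilon$, and an open bounded domain chosen so that $V$ is relatively compact inside it and inside $U$; this yields a complex $\mathcal{S}$ and a Lipschitz map $\phi$ with $\phi(E)$ contained in the $d$-skeleton of $\mathcal{S}$, with $|\mathcal{S}|$ sandwiched between a neighborhood of $V$ and $U$, and with $\mathbb{F}_{\kappa}(\phi(E))\leq(1+\epsilon)\mathbb{F}_{\kappa}(E)$. I would then set $E''=\phi(E)$ (suitably combined with the part of $E$ outside $W$), define $W=\mathring{|\mathcal{S}|}$, and check that the map $\phi$, because $\|\phi-\id\|_\infty\leq\epsilon$ and $\phi$ is the identity outside $U$, is a $\diam(U)$-deformation over $U$ in the sense required by item (1). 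Items (1) and (2) of the lemma are then essentially a direct translation of conclusions (1), (3), (4) of the theorem, once I verify $E''\cap\overline{W}=|\mathcal{S}'|$ for the appropriate $d$-subskeleton $\mathcal{S}'$.

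The substance of the lemma is item (3), the quasiminimality of each dimensional stratum. Here I would argue dimension by dimension. Set $W^d=W$ and $E^d=|\mathcal{S}^d|\cap W^d$ where $\mathcal{S}^d$ collects the $d$-faces of $\mathcal{S}$ actually used by $\phi(E)$. The key geometric point is that a piece of a polyhedral $d$-skeleton carried by a complex with bounded shape control $\mathcal{R}(\mathcal{S})\geq M$ is automatically quasiminimal with a constant $M'$ depending only on this shape-control bound, hence only on $d$ and $n$. The mechanism is that any competitor Lipschitz deformation $f$ of $E^d$ inside $W^d$ can be compared, face by face, against the affine structure of the complex: the boundedly many faces meeting any ball of radius $<\diam(W^d)$, together with the nondegeneracy guaranteed by $\mathcal{R}(\mathcal{S})\geq M$, force $\H^d(E^d\cap W_f)\leq M'\H^d(f(E^d\cap W_f))$. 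I would then define $W^{d-1}=W^d\setminus E^d$ and $E^{d-1}=|\mathcal{S}^{d-1}|\cap W^{d-1}$ and repeat: on the open complement of the $d$-stratum, the $(d-1)$-skeleton is again a polyhedral set of its own dimension, with the same shape-control bound, so the identical comparison argument gives $E^{d-1}\in{\bf QM}(W^{d-1},M',\diam(W^{d-1}),\H^{d-1})$. Iterating down to $\ell=0$ produces the disjoint decomposition $E''\cap W=E^d\sqcup\cdots\sqcup E^0$ with each stratum quasiminimal in its own open region, which is exactly conclusion (3).

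The main obstacle is establishing the dimension-independent quasiminimality constant $M'$ and, relatedly, verifying that the strata genuinely decompose $E''\cap W$ as a disjoint union living in the nested open sets $W^\ell$. The quasiminimality estimate is where the shape control $\mathcal{R}(\mathcal{S})\geq M$ does all the work: one must show that projecting a competitor back onto the relevant skeleton (a Federer--Fleming type projection adapted to the polytopes, which is exactly the tool Feuvrier's construction is built to support) costs at most a bounded factor in $\H^\ell$, and that this factor is controlled purely by the combinatorial and metric regularity of the complex rather than by $E$ or $\epsilon$. I expect the cleanest route is to cite the quasiminimality of skeletons of shape-controlled complexes directly from Feuvrier's framework in~\cite{Feuvrier:2008,Feuvrier:2009}, since the faces are very often parallel to $E$ and the needed projection estimates are precisely what his polyhedral networks are engineered to provide; the remaining work is then the careful bookkeeping of the nested regions $W^\ell=W^{\ell+1}\setminus E^{\ell+1}$ and the check that each $E^\ell$ is relatively closed in $W^\ell$ with locally finite $\H^\ell$-measure, so that the definition of ${\bf QM}(W^\ell,M',\diam(W^\ell),\H^\ell)$ applies verbatim.
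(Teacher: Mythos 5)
Your proposal is correct and takes essentially the same route as the paper: the paper gives no independent argument for Lemma~\ref{lemma1}, but simply states that its proof is identical to that of Lemme 5.2.6 in \cite{Feuvrier:2008} (Lemma 9 in \cite{Feuvrier:2009}), which is precisely the argument you outline --- apply the preceding polyhedral approximation theorem, reduce the image to a union of faces, and obtain quasiminimality of each dimensional stratum from the shape control $\mathcal{R}(\mathcal{S})\geq M$ via Federer--Fleming type projections, peeling off one dimension at a time. Your closing suggestion to cite Feuvrier's framework for the skeleton quasiminimality is exactly what the paper itself does.
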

The proof of this lemma is also the same as the proof of the Lemme 5.2.6
in~\cite{Feuvrier:2008} or the Lemma 9 in~\cite{Feuvrier:2009}. Therefore we
omit the proof.

We now turn to prove the main result of this paper.
\begin{proof}[Proof of the Theorem 1.2]
	We claim that we can find a ball $B(0,R)$ and a sequence of compact
	sets $(E_{k})_{k\geq 1}$ such that $B\subset B(0,R)$, $E_{k}$ spans $L$ in
	\v{C}ech homology, $E_{k}\subset B(0,R)$ and 
	\[
	\mathbb{F}_{\kappa}(E_{k}\setminus B)\to m(B,G,L,F,\kappa).
	\]

	We take any sequence of compact sets $(E_{k}')_{k\geq
	1}$ in $\mathcal{F}$ such that 
	\[
	\mathbb{F}_{\kappa}(E_{k}'\setminus B)\to m(B,G,L,F,\kappa).
	\]
	We take 
	\[
	U_{k}'=\{ x\in B(0, R_{k})\mid \dist(x,B)>2^{-k}\},
	\]
	where 
	\[
	R_{k}>\max\{ k, R_{k-1}+1, \dist(0, E_{k}')+\diam(E_{k}')+1\}.
	\]
	By lemma~\ref{lemma1}, we can find a Lipschitz map
	$\phi_{k}':\mathbb{R}^{n}\to\mathbb{R}^{n}$ and a complex $\mathcal{S}_{k}$
	such that 
	\[
	\phi_{k}'\vert_{U_{k}'}=\id_{U_{k}'},\ U_{k-1}'\subset\left\vert
	\mathcal{S}_{k}\right\vert\subset U_{k}',\ E_{k}'\subset\left\vert
	\mathcal{S}_{k}\right\vert,
	\]
	and
	\[
	\phi_{k}'(E_{k}')\cap W_{k}'=F_{k}\sqcup F_{k}',
	\]
	where $W_{k}'=\mathring{\left\vert \mathcal{S}_{k}\right\vert}$ and 
	\[
	F_{k}\in {\bf QM}(W_{k}',M,\diam(W_{k}'),\H^{d}),
	\]
	and $F_{k}'$ is contained in the union
	of $(d-1)$-dimensional skeleton of $\mathcal{S}_{k}$.

	We now prove that $(F_{k})_{k\geq 1}$ is bounded, i.e. we can find a large
	ball $B(0,r)$ such that $B\cup(\cup_{k} F_{k})\subset B(0,r)$. Suppose not,
	that is, suppose that for any large number $r>R_{1}$ there exist $k>4r$ such that
	$F_{k}\setminus B(0,2r)\neq\emptyset$. If $x\in F_{k}\setminus B(0,2r)$, 
	we take a cube $Q$ centered at $x$ with
	$\diam(Q)=r$, then by using Proposition 4.1 in~\cite{DS:2000}, we have 
	that 
	\[
	\H^{d}(F_{k}\cap Q)\geq C^{-1}\diam(Q)^{d},
	\]
	where $C$ only depend on
	$n$ and $M$. If we take $r$ large enough, for example
	\[
	r^{d}>\frac{2C}{\min\{ \inf F, \kappa \}}(m(B,G,L,F,\kappa)+1),
	\]
	and take $k$ large enough such that
	$\mathbb{F}_{\kappa}(E_{k}')<m(B,G,L,F,\kappa)+1$,
	then 
	\begin{align*}
	C^{-1}r^{d}&\leq \H^{d}(F_{k}\cap Q)\\
	&\leq\frac{1}{\min\{ \inf F, \kappa \}}\mathbb{F}_{\kappa}(\phi_{k}'(E_{k}'))\\
	&\leq \frac{(1+2^{-k})}{\min\{ \inf F, \kappa \}}\mathbb{F}_{\kappa}(E_{k}')\\
	&<\frac{2}{\min\{ \inf F, \kappa \}}(m(B,G,L,F,\kappa)+1),
\end{align*}
	this is a contradiction. Thus $\cup_{k} F_{k}$ is bounded. It is easy
	to see that $\cup_{k}(\phi_{k}'(E_{k}')\cap W_{k}'^{c})$ is bounded, so we 
	can assume that both $B\cup(\cup_{k} F_{k})$ and 
	$\cup_{k}(\phi_{k}'(E_{k}')\cap W_{k}'^{c})$ are contained in a large ball 
	$B(0,R)$. We take map $\rho:\mathbb{R}^{n}\to\mathbb{R}^{n}$ defined by
	\[
	\rho(x)=\begin{cases}
		x,&x\in B(0,R)\\
		\frac{R}{\left\vert x\right\vert}x,&x\in B(0,R)^{c},
	\end{cases}
	\]
	$\rho$ is 1-Lipschitz map.
	We put $E_{k}=\rho\circ\phi_{k}'(E_{k}')$, then $E_{k}\in\mathcal{F}$, and
	\[
	E_{k}=(\phi_{k}'(E_{k}')\cap W_{k}'^{c})\cup F_{k}\cup \rho(F_{k}').
	\]
	Since $\H^{d}(F_{k}')=0$, we have that 
	\[
	\mathbb{F}_{\kappa}(E_{k}\setminus
	B)=\mathbb{F}_{\kappa}(\phi_{k}'(E_{k}')\setminus B)\leq
	(1+2^{-k})\mathbb{F}_{\kappa}(E_{k}'\setminus B),
	\]
	therefore 
	\[
	\mathbb{F}_{\kappa}(E_{k}\setminus B)\to m(B,G,L,F,\kappa),
	\]
	and $(E_{k})_{k\geq 1}$ is a sequence which we desire.

	If $\mathbb{F}_{\kappa}(E_{k}\setminus B)=0$ for some $k\geq 1$, 
	then $m(B,G,L,F,\kappa)=0$ and $E_{k}$ is a minimizer, we have nothing
	to prove. We now suppose that for all $k\geq 1$,
	$0<\mathbb{F}_{\kappa}(E_{k}\setminus B)<+\infty$. Thus
	$0<\H^{d}(E_{k}\setminus B)<+\infty$.

	We put 
	\[
	U=B(0,R+1)\setminus B,\ V_{k}=\{x\in B(0,R+1-2^{k})\mid \dist(x,B)>2^{-k}\}.
	\]

	By lemma~\ref{lemma1}, we can find polyhedral complexes $\mathcal{S}_{k}$,
	Lipschitz maps
	$\phi_{k}:\mathbb{R}^{n}\to\mathbb{R}^{n}$ and a constant $M'=M'(n,d)$ 
	such that
	\begin{enumerate}[{\indent\rm (1)}]
		\item $V_{k}\subset\left\vert\mathcal{S}_{k}\right\vert\subset V_{k+1}$,
			$\phi_{k}\vert_{V_{k+1}^{c}}=\id_{V_{k+1}^{c}}$,
			and there exists a $d$-dimensional skeleton $\mathcal{S}_{k}'$ of
			$\mathcal{S}_{k}$ such that
			$E_{k}''\cap W_{k}=\left\vert\mathcal{S}_{k}' \right\vert$, where
			$E_{k}''=\phi_{k}(E_{k})$ and
			$W_{k}=\left\vert\mathcal{S}_{k}\right\vert$;
		\item $\mathbb{F}_{\kappa}(E_{k}''\setminus B)\leq(1+2^{-k})\mathbb{F}_{\kappa}(E_{k}\setminus B)$;
		\item there exist complexes
			$\mathcal{S}_{k}^{0},\ldots,\mathcal{S}_{k}^{d}$ such that
			$\mathcal{S}_{k}^{\ell}$ is contained in the $\ell$-skeleton of
			$\mathcal{S}_{k}$ and there is a disjoint decomposition 
			\[
			E_{k}''\cap \mathring{W}_{k}=E_{k}^{d}\sqcup
			E_{k}^{d-1}\sqcup\cdots\sqcup E_{k}^{0},
			\]
			where for each $0\leq \ell \leq d$,
			\[
			E_{k}^{\ell}\in {\bf QM}(W_{k}^{\ell},M',\diam(W_{k}^{\ell}),\H^{\ell}),
			\]
			where 
			\begin{align*}
				\begin{cases}
					W_{k}^{d}=\mathring{W}_{k}\\
					W_{k}^{\ell-1}=W_{k}^{\ell}\setminus E_{k}^{\ell}
				\end{cases}
				&&
				\begin{cases}
					E^{d}=\left\vert\mathcal{S}_{k}^{d}\right\vert\cap W_{k}^{d}\\
					E^{\ell}=\left\vert\mathcal{S}_{k}^{\ell}\right\vert\cap W_{k}^{\ell},
				\end{cases}
			\end{align*}
			and $\mathring{W}_{k}$ is the interior of $W_{k}$.
	\end{enumerate}

	We note that for each $k$, $E_{k}''$ and $W_{k}$ are two compact subsets of
	$\mathbb{R}^{n}$, thus $E_{k}\cap W_{k}$ is a compact subset of
	$\mathbb{R}^{n}$. We may suppose that $E_{k}''\cap W_{k}\to E'$ in Hausdorff
	distance, passing to a subsequence if necessary. We put $E=E'\cup B$. We
	will show that $E$ is a minimizer.

	First of all, we show that $E$ spans $B$ in \v{C}ech homology, so
	$E\in\mathcal{F}$. Since $\phi_{k}$ is Lipschitz map and
	$\phi_{k}\vert_{V_{k+1}^{c}}=\id_{V_{k+1}^{c}}$, in particular,
	$\phi_{k}\vert_{B}=\id_{B}$, thus $E_{k}''=\phi_{k}(E_{k})$ spans $L$ 
	in \v{C}ech homology. Since $V_{k}\subset W_{k}\subset V_{k+1}$, we have
	that 
	\[
	B\subset \phi_{k}(E_{k})\setminus W_{k}\subset B(2^{-k}),
	\]
	where we denote by $B(\epsilon)$ denote the $\epsilon$-neighborhood of $B$.
	Thus $E_{k}''\setminus W_{k}\to B$ in Hausdorff distance, so 
	\[
	E_{k}''=(E_{k}''\cap W_{k})\cup (E_{k}''\setminus W_{k})\to E'\cup B=E.
	\]
	By proposition~\ref{prop:HLS}, we have that $E$ spans $L$ in \v{C}ech
	homology.

	Next, we will show that $\mathbb{F}_{\kappa}^{d}(E\setminus
	B)=m(B,G,L,F,\kappa)$.

	%Since $E_{k}''\subset B(0,R+1)$, we have $E\subset\overline{B(0,R+1)}$.
	Passing to a subsequence if necessary, we may assume that 
	\[
	E_{k}^{d}\to E^{d}\text{ in } U, \text{ for } 0\leq \ell \leq d,
	\]
	For any $0\leq\ell\leq d$, we put
	\[
	U^{\ell}=U\setminus\bigcup_{\ell<\ell'\leq d}E^{\ell'},
	\]
	we assume that $E_{k}^{\ell}\to E^{\ell}$ in $U$. Then 
	\[
	E\setminus B=\bigcup_{0\leq\ell\leq d} E^{\ell}.
	\]

	Since
	\[
	E_{k}^{d}\in {\bf QM}(W_{k}^{d},M',\diam(W_{k}^{d}),\H^{d}),
	\]
	we can apply the Theorem~\ref{thm:lsc}, and get that 
	\[
	\mathbb{F}_{\kappa}(E^{d}\cap W_{k}^{d})\leq\liminf_{m\to\infty}\mathbb{F}_{\kappa}(E_{m}^{d}\cap
	W_{k}^{d})\leq \liminf_{m\to\infty}\mathbb{F}_{\kappa}(E_{m}^{d}).
	\]
	Since $V_{k}\subset W_{k}\subset V_{k+1}$ and $W_{k}^{d}=\mathring{W}_{k}$,
	we have 
	\[
	\bigcup_{k} W_{k}^{d}=\bigcup_{k} V_{k}=U,
	\]
	thus 
	\[
	\mathbb{F}_{\kappa}(E^{d})\leq\liminf_{m\to\infty}\mathbb{F}_{\kappa}(E_{m}^{d}).
	\]

	For any $0\leq\ell\leq d$, for any $\varepsilon>0$, we put
	$U_{\varepsilon}^{d}=B(0,R+1-\varepsilon)\cap U^{d}$ and 
	\[
	U_{\varepsilon}^{\ell}=\left\{	x\in B(0,R+1-\varepsilon)\midd \dist
	\left(x,\bigcup_{\ell<\ell'\leq d}E^{\ell'}\right)>\varepsilon\right\}.
	\]
	Then $U_{\varepsilon_{1}}^{\ell}\subset U_{\varepsilon_{2}}^{\ell}$ for any
	$0<\varepsilon_{2}<\varepsilon_{1}$, and 
	\[
	\bigcup_{\varepsilon>0} U_{\varepsilon}^{\ell}=U^{\ell}.
	\]
	Since $E_{k}^{\ell}\to E^{\ell}$ in $U$, we have that $E_{k}^{\ell}\cap
	U_{\varepsilon}\to E^{\ell}\cap U_{\varepsilon}$ in $U_{\varepsilon}$.
	We will show that for any $\varepsilon>0$, there exists $k_{\varepsilon}$
	such that for $k\geq k_{\varepsilon}$,
	\[
	E_{k}^{\ell}\cap U_{\varepsilon}^{\ell}\in {\bf QM}(U_{\varepsilon}^{\ell},
	M',\diam(U_{\varepsilon}^{\ell}),\H^{\ell}).
	\]

	Indeed, for any	$\varepsilon>0$, we can find $k_{\varepsilon}$ such that 
	$U_{\varepsilon}^{\ell}\subset W_{k}^{\ell}$. We prove this fact by
	induction on $\ell$.

	First, we take a positive integer $k_{\varepsilon}$ such that $2^{-k_{\varepsilon}}<\varepsilon$, 
	then $U_{\varepsilon}^{d}\subset W_{k}^{d}$ for any $k\geq k_{\varepsilon}$.

	Next, we suppose that there is an integer $k_{\varepsilon}$ such that
	$U_{\varepsilon}^{\ell}\subset W_{k}^{\ell}$ for $k\geq k_{\varepsilon}$.
	Since $E_{k}^{\ell}\to E^{\ell}$ in $U^{\ell}$ and
	\[
	W_{k}^{\ell-1}=W_{k}^{\ell}\setminus E_{k}^{\ell},\ 
	U_{\varepsilon}^{\ell}=\left\{ x\in U_{\varepsilon}^{\ell}\midd \dist\left(x,
	E^{\ell}\right)>\varepsilon \right\},
	\]
	we can find $k_{\varepsilon}'$ such that $U_{\varepsilon}^{\ell-1}\subset
	W_{k}^{\ell-1}$ for $k\geq k_{\varepsilon}'$.

	Since $U_{\varepsilon}^{\ell}\subset W_{k}^{\ell}$ and 
	\[
	E_{k}^{\ell}\in{\bf QM}(W_{k}^{\ell},M',\diam(W_{k}^{\ell}),\H^{\ell}),
	\]
	we get that 
	\[
	E_{k}^{\ell}\cap U_{\varepsilon}^{\ell}\in{\bf
	QM}(U_{\varepsilon}^{\ell},M',\diam(U_{\varepsilon}^{\ell}),\H^{\ell}).
	\]

	For any $\delta>0$, we put $\Omega_{\delta}=\{ x\in U_{\varepsilon}\mid
	\dist(x,U_{\varepsilon}^{c})\geq 10\delta \}$. $E_{k}^{\ell}\cap
	\Omega_{\delta}$ is a compact set, and $\{ B(x,\delta)\mid x\in
	E_{k}^{\ell}\cap\Omega_{\delta} \}$ is an open covering of
	$E_{k}^{\ell}\cap\Omega_{\delta}$, we can find a finitely many balls $\{
	B(x_{i},\delta)\}_{i\in I}$ which is a covering of $E_{k}^{\ell}\cap
	\Omega_{\delta}$, by the 5-covering lemma, see for example the Theorem 2.1 in 
	\cite{Mattila:1995}, we can find a subset $J\subset I$ such
	that $B(x_{j_{1}},\delta)\cap B(x_{j_{2}},\delta)=\emptyset$ for
	$j_{1},j_{2}\in J$ with $j_{1}\neq j_{2}$, and 
	\[
	\bigcup_{i\in I} B(x_{i},\delta)\subset \bigcup_{j\in J} B(x_{j},5\delta).
	\]
	Since $B(x_{j_{1}},\delta)\cap B(x_{j_{2}},\delta)=\emptyset$ for
	$j_{i},j_{2}\in J$, we have that
	\[
	\mathcal{L}^{n}(U_{\varepsilon})\geq \sum_{j\in
	J}\mathcal{L}^{n}(B(x_{j},\delta)),
	\]
	thus 
	\[
	\#J\leq\frac{\mathcal{L}^{n}(U_{\varepsilon})}{\omega_{n}\delta^{n}}.
	\]
	By the Proposition 4.1 in~\cite{DS:2000}, we have that 
	\[
	C^{-1}(5\delta)^{\ell}\leq\H^{\ell}(E_{k}^{\ell}\cap B(x_{j},5\delta))\leq
	C(5\delta)^{\ell},
	\]
	so
	\[
	\H^{\ell}(E_{k}^{\ell}\cap \Omega_{\delta})\leq\sum_{j\in
	J}\H^{\ell}(E_{k}^{\ell}\cap B(x_{j},5\delta))\leq\sum_{j\in
	J}C(5\delta)^{\ell}\leq\omega_{n}^{-1}
	\mathcal{L}^{n}(U_{\varepsilon})5^{d}\delta^{\ell-n}C.
	\]

	Applying the theorem 3.4 in~\cite{David:2003}, we get that
	\[
	\H^{\ell}(E^{\ell}\cap
	\Omega_{\delta})\leq\liminf_{k\to\infty}\H^{\ell}(E_{k}^{\ell}\cap
	\Omega_{\delta})\leq\omega_{n}^{-1}
	\mathcal{L}^{n}(U_{\varepsilon})5^{d}\delta^{\ell-n}C,
	\]
	and $\dim_{\H}E^{\ell}\cap \Omega_{\delta}\leq\ell$, hence $\dim_{\H}E^{\ell}\leq \ell$, thus $\H^{d}(E^{\ell})=0$.

	we get that
	\begin{align*}
		\mathbb{F}_{\kappa}(E\setminus B)&=\mathbb{F}_{\kappa}(E^{d})\\
		&\leq\liminf_{k\to\infty}\mathbb{F}_{\kappa}(E_{k}^{d})\\
		&\leq\liminf_{k\to\infty}\mathbb{F}_{\kappa}(E_{k}''\setminus B)\\
		&\leq\liminf_{k\to\infty}(1+2^{-k})\mathbb{F}_{\kappa}(E_{k}\setminus B)\\
		&=\liminf_{k\to\infty}\mathbb{F}_{\kappa}(E_{k}\setminus B)\\
		&=m(B,G,L,F,\kappa).
	\end{align*}

	Since $E\in\mathcal{F}$, we have that 
	\[
	\mathbb{F}_{\kappa}(E\setminus B)\geq m(B,G,L,F,\kappa),
	\]
	therefore 
	\[
	\mathbb{F}_{\kappa}(E\setminus B)=m(B,G,L,F,\kappa).
	\]

\end{proof}

%\newpage
%\bibliographystyle{plain}
\bibliography{\jobname}
\end{document}